\newcommand{\dyle}{\displaystyle}
\newcommand{\dint}{\dyle\int}
\newcommand{\iy}{\infty}
\newcommand{\p}{\partial}
\newcommand{\re}{{I\!\!R}}
\newcommand{\ren}{\re^N}
\newcommand{\irn}{\int\limits_{\re^N}}
\newcommand{\inn}{\mbox{ in }}
\renewcommand{\a }{\alpha }
\newcommand{\D }{\Delta }
\newcommand{\g }{\gamma}
\renewcommand{\l }{\lambda }
\newcommand{\s }{\sigma }
\renewcommand{\t }{\tau }
\renewcommand{\O }{\Omega }
\newcommand{\cqd}{{\unskip\nobreak\hfil\penalty50
        \hskip2em\hbox{}\nobreak\hfil\mbox{\rule{1ex}{1ex} \qquad}
        \parfillskip=0pt \finalhyphendemerits=0\par\medskip}}
        \newtheorem{Theorem}{Theorem}[section]
\newtheorem{Definition}[Theorem]{Definition}
\newtheorem{Lemma}[Theorem]{Lemma}
\newtheorem{Proposition}[Theorem]{Proposition}
\newtheorem{remarks}[Theorem]{Remarks}
\newtheorem{remark}[Theorem]{Remark}
\date{}
\renewcommand\theequation{\thesection.\@arabic\c@equation}
\begin{document}

\title[ Fractional heat equation involving Hardy Potential]
{A note on the Fujita exponent in Fractional heat equation involving the Hardy potential}
\thanks{Work partially supported by Project MTM2016-80474-P, MINECO, Spain. The first author is also partially supported by an Erasmus grant from Autonoma University of Madrid.}
\author[B. Abdellaoui, I. Peral, A. Primo ]{Boumediene abdellaoui, Ireneo Peral, Ana Primo }

\address{\hbox{\parbox{5.7in}{\medskip\noindent {B. Abdellaoui, Laboratoire d'Analyse Nonlin\'eaire et Math\'ematiques
Appliqu\'ees. \hfill \break\indent D\'epartement de Math\'ematiques, Universit\'e Abou Bakr Belka\"{\i}d, Tlemcen, \hfill\break\indent Tlemcen 13000,
Algeria.}}}}
\address{\hbox{\parbox{5.7in}{\medskip\noindent{I. Peral and A. Primo, Departamento de Matem\'aticas,\\ Universidad Aut\'onoma de Madrid,\\
        28049, Madrid, Spain. \\[3pt]
        \em{E-mail addresses: }\\{\tt boumediene.abdellaoui@uam.es, \tt ireneo.peral@uam.es, ana.primo@uam.es
         }.}}}}

\date{\today}

\thanks{2010 {\it Mathematics Subject Classification. $35B25, 35B44, 35K58, 35B33, 47G20$}   \\
   \indent {\it Keywords.}  Fujita exponent, Fractional Cauchy heat equation with Hardy potential, blow-up, global solution}

 \begin{abstract}

 In this work, we are interested on the study of the Fujita exponent and the meaning of the blow-up for
 the Fractional Cauchy problem with the Hardy potential, namely,
\begin{equation*}
u_t+(-\Delta)^s u=\lambda\dfrac{u}{|x|^{2s}}+u^{p}\inn\ren,\\ u(x,0)=u_{0}(x)\inn\ren,
\end{equation*}
where $N> 2s$, $0<s<1$,  $(-\Delta)^s$ is the fractional laplacian of order $2s$, $\l>0$,  $u_0\ge 0$, and  $1<p<p_{+}(s,\lambda)$, where $p_{+}(\lambda, s)$ is
the critical existence power found in \cite{BMP} and \cite{AMPP}.
\end{abstract}

\maketitle

\rightline{\it To Sandro in his 70th birthday with  our friendship.}

\section{Introduction}\label{sec:cauchy}
In the pioneering work \cite{F}, Fujita found a critical exponent for the heat equation with  a semilinear term of power type.  More precisely, for the
problem,
\begin{equation}\label{Fujita}\left\{
\begin{array}{l}
u_t=\Delta u+u^p,\, x\in\ren,\, t>0,\\ u(x,0)=u_0(x)\geq 0,\,x\in\ren,
\end{array}\right.
\end{equation}
where $1<p<\iy$, Fujita proved that if $1<p< 1+\dfrac{2}{N}$, then there exists $T>0$ such that the solution to problem \eqref{Fujita} satisfies
$||u(\cdot,t_n)||_{\infty}\to \iy$ as $t_n\to T$. However, if $p>1+\dfrac{2}{N}$, then there are both global solutions for small data as well as non-global
solutions for large data. The critical value $F(0)=1+\dfrac{2}{N}$ is often called the critical Fujita blow-up exponent for the heat equation. Moreover it is
proved  that for $p=1+\dfrac{2}{N}$, a suitable norm of the solution goes to infinity in a finite time. We refer to \cite{W} for a simple proof of this last fact
(see also \cite{KST}).

Sugitani in \cite{SS} studies the same kind of question for \textit{the fractional heat equation}, that is, the problem,
\begin{equation}\label{nolocal}
\left\{
\begin{array}{rcll}
 u_t+(-\Delta)^{s} u&=& u^p &\mbox{ in } \Omega\times (0,T),\\
u(x,t)&>& 0 & \inn \Omega\times (0,T),\\ u(x,t)&=& 0 &\inn (\ren\setminus\Omega)\times[ 0,T),\\ u(x,0)&=& u_0(x) &\mbox{ if }x\in\O,
\end{array}
\right.
\end{equation}
where $N> 2s$, $0<s<1$,  $p>1$ and  $u_0\ge 0$ is  in a suitable class of functions.

By $(-\Delta)^s$ we denote the  fractional Laplacian of order $2s$ introduced by M. Riesz in \cite{MRiesz}, that is,
\begin{equation}\label{fraccionario}
(-\Delta)^{s}u(x):=a_{N,s}\mbox{ P.V. }\int_{\mathbb{R}^{N}}{\frac{u(x)-u(y)}{|x-y|^{N+2s}}\, dy},\, s\in(0,1),
\end{equation}
where
$$a_{N,s}=2^{2s-1}\pi^{-\frac N2}\frac{\Gamma(\frac{N+2s}{2})}{|\Gamma(-s)|}$$
is the normalization constant to have the identity
$$(-\Delta)^{s}u=\mathcal{F}^{-1}(|\xi|^{2s}\mathcal{F}u),\, \xi\in\mathbb{R}^{N}, s\in(0,1),$$
 for every $u\in \mathcal{S}(\mathbb{R}^N)$, the Schwartz class.
See \cite{Landkof}, \cite{dine}, \cite{FLS} and Chapter 8 of \cite{Peral-Soria}, for technical details and properties of the fractional Laplacian.

In \cite{APP},  the authors deal with the following problem,
\begin{equation}\label{local-Hardy}
\left\{
\begin{array}{rcll}
 u_t-\Delta u&=&\l\dfrac{\,u}{|x|^{2}}+ u^p+ c f &\mbox{ in } \Omega\times (0,T),\\
u(x,t)&>&0 & \inn \Omega\times (0,T),\\ u(x,t)&=&0 & \inn \p\Omega\times[ 0,T),\\ u(x,0)&=&u_0(x) &\mbox{ if }x\in\O,
\end{array}
\right.
\end{equation}
where $N>2$ and $0\in \Omega$.

This problem is related to the classical Hardy inequality:

\textsc{Hardy Inequality.} \textit{Assume $N\ge 3$. For all $\phi\in\mathcal{C}^\infty_0(\mathbb{R}^N)$ the following inequality holds,
\begin{equation}\label{HLD}
\Big(\frac{N-2}{2}\Big)^2\int_{\mathbb{R}^N}\dfrac{\phi^2(x)}{|x|^2} dx\le \int_{\mathbb{R}^N}|\nabla \phi(x)|^2dx.
\end{equation}
Moreover $\Lambda_N:=\Big(\dfrac{N-2}{2}\Big)^2$ is optimal and is not achieved. }

 The blow-up in the $L^\infty$ norm for the solution of problem \eqref{local-Hardy} is produced in any time $t>0$, for any nonnegative data and for all $p>1$, according with the results by Baras-Goldstein in \cite{BaGo}. Therefore, the Fujita behavior in the presence of the Hardy potential must be understood in a different way.

 For $\lambda>0$, setting $\mu_{1}(\l)=\frac{N-2}{2}-\sqrt{\Big(\frac{N-2}{2}\Big)^2-\l}$, then it was proved that if $1<p<1+\frac{2}{N-\mu_1(\l)}$, there exists $T^*>0$ that is independents of the nonnegative initial datum, such that the solution $u$ to problem \eqref{local-Hardy} satisfies
\begin{equation}\label{blow-up-local}
\lim\limits_{t\to T^{*}}\dint_{B_{r}(0)}|x|^{-\mu_1(\l)}u(x,t)\,dx=\iy,
\end{equation}
for any ball $B_{r}(0)$.  Moreover for $p>1+\dfrac{2}{N-\mu_1(\l)}$, if the initial datum is small enough, there exists a global solution to \eqref{local-Hardy}.  According to this behavior the corresponding {\it Fujita type exponent} for problem \eqref{local-Hardy} is defined by $F(\lambda)= 1+\dfrac{2}{N-\mu_1(\l)}$ and
the blow-up is understood in the sense of local weighted $L^1$ associated to \eqref{blow-up-local}. The Hardy inequality is an expression of the
\textit{uncertainty Heisenberg principle}, hence we can say that the result in  \cite{APP}, explains the influence of the uncertainty principle on the  diffusion
problem  \eqref{local-Hardy}.

The following fractional Hardy inequality appears  in \cite{FLS} in order to study the relativistic stability of the matter.
\begin{Theorem}\label{DH}{\it (Fractional Hardy inequality).}
For all $u\in \mathcal{C}^{\infty}_{0}(\ren)$ the following inequality holds,
\begin{equation}\label{Hardy}
\dint_{\ren} \,|\xi|^{2s} |\hat{u}|^2\,d\xi\geq \Lambda_{N,s}\,\dint_{\ren} |x|^{-2s} u^2\,dx,
\end{equation}
where
$$
\Lambda_{N,s}= 2^{2s}\dfrac{\Gamma^2(\frac{N+2s}{4})}{\Gamma^2(\frac{N-2s}{4})}.
$$
The constant $\Lambda_{N,s}$ is optimal and is not attained. Moreover, $\Lambda_{N,s}\to \Lambda_{N,1}:=\left(\dfrac{N-2}{2}\right)^2$, the classical Hardy
constant, when $s$ tends to $1$.
\end{Theorem}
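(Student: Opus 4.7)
\medskip
\noindent\textbf{Proof plan.}
My plan is to follow the ground state representation approach, which simultaneously yields the inequality, identifies the sharp constant, and shows non-attainability. The natural candidate for the extremizer is the homogeneous function $\phi(x)=|x|^{-(N-2s)/2}$, which is the unique (up to scaling) positive solution of the formal Euler-Lagrange equation $(-\Delta)^s\phi=\Lambda_{N,s}|x|^{-2s}\phi$; note that $\phi\notin H^s(\ren)$ because of its behaviour at the origin and at infinity, which is what will prevent the infimum from being attained.

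First I would compute $(-\Delta)^s\phi$ explicitly. Using either the Riesz potential identity for homogeneous functions $(-\Delta)^s |x|^{-\a}=c(N,s,\a)|x|^{-\a-2s}$ with
$$c(N,s,\a)=2^{2s}\frac{\G(\frac{\a+2s}{2})\G(\frac{N-\a}{2})}{\G(\frac{\a}{2})\G(\frac{N-\a-2s}{2})},$$
or the singular integral definition \eqref{fraccionario} together with the Beta-function identity that appears when one integrates against the Poisson-type kernel, one substitutes $\a=(N-2s)/2$ and obtains exactly
$$(-\D)^s\phi(x)= 2^{2s}\frac{\G^2(\frac{N+2s}{4})}{\G^2(\frac{N-2s}{4})}|x|^{-2s}\phi(x)=\Lambda_{N,s}|x|^{-2s}\phi(x).$$
This fixes the value of the sharp constant.

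Next, for $u\in\mathcal{C}^\infty_0(\ren\setminus\{0\})$ I would write $u=\phi v$ and use the ground state substitution (an integration by parts at the level of the bilinear form, see e.g. \cite{FLS}) to obtain the identity
$$\int_{\ren}|\xi|^{2s}|\hat u|^2\,d\xi-\Lambda_{N,s}\int_{\ren}\frac{u^2}{|x|^{2s}}\,dx=\frac{a_{N,s}}{2}\iint_{\ren\times\ren}\frac{(v(x)-v(y))^2\,\phi(x)\phi(y)}{|x-y|^{N+2s}}\,dx\,dy.$$
Since the right-hand side is manifestly non-negative, the Hardy inequality \eqref{Hardy} follows, and by standard density arguments it extends to all $u\in\mathcal{C}^\infty_0(\ren)$ and to the natural Sobolev space. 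The hard technical point is to justify this algebraic identity when $v$ is not compactly supported away from $0$; I would localise $u$ away from the origin, prove the identity there, and then pass to the limit using the decay of $\phi$ and $u$.

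Finally I would address sharpness and non-attainability: plugging in truncated and regularised versions $\phi_{\e,R}$ of $\phi$ (cut off at scales $\e$ and $R$) and computing asymptotics as $\e\to 0$, $R\to\infty$ shows that the ratio of the two sides tends to $\Lambda_{N,s}$, so the constant is optimal. Non-attainability follows from the ground state identity: equality forces $v$ to be constant, whence $u$ is a multiple of $\phi\notin H^s(\ren)$. For the limit as $s\to 1$, an application of Gauss's duplication formula for the $\G$ function to the quotient $\G^2(\frac{N+2s}{4})/\G^2(\frac{N-2s}{4})$ yields $\Lambda_{N,s}\to ((N-2)/2)^2=\Lambda_{N,1}$. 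The main obstacle is really the rigorous derivation of the ground state representation, because both sides of the identity involve borderline-integrable quantities near the origin; careful truncation plus monotone/dominated convergence is needed to close the argument.
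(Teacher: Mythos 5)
The paper itself does not prove Theorem \ref{DH}: the inequality is quoted, attributed to Herbst \cite{He}, with pointers to \cite{B}, \cite{FLS}, \cite{SW}, \cite{Y} and to Section 9.2 of \cite{Peral-Soria} for a self-contained proof, so there is no in-paper argument to compare you against line by line. What you propose is, in outline, the ground-state-representation proof of Frank--Lieb--Seiringer \cite{FLS}, and it is sound: your constant $c(N,s,\a)$ for $(-\D)^s|x|^{-\a}$ is exactly the paper's $\l(\a_\l)$ in \eqref{lambda} (set $\a=\frac{N-2s}{2}-\a_\l$), and the choice $\a=\frac{N-2s}{2}$, i.e. $\a_\l=0$, produces precisely $\Lambda_{N,s}$; the identity with remainder $\frac{a_{N,s}}{2}\iint (v(x)-v(y))^2\,\phi(x)\phi(y)\,|x-y|^{-N-2s}\,dx\,dy$ for $u=\phi v$ then yields \eqref{Hardy}, optimality follows from truncations of $\phi$, and non-attainment from the fact that the only candidate extremizer $\phi=|x|^{-(N-2s)/2}$ lies outside the form domain. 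This is a genuinely different route from the sources the paper cites: Herbst's original argument computes the $L^2$ operator norm of $|x|^{-s}(-\D)^{-s/2}$ via the Mellin transform, and the other cited proofs (Beckner, Stein--Weiss, Yafaev, and the direct real-analysis proof in \cite{Peral-Soria}) work on the Fourier/Riesz-potential side. What your route buys, beyond the sharp constant, is the explicit remainder term for $v=|x|^{\frac{N-2s}{2}}u$, which is exactly the structure behind Lemma \ref{singularity} and the ground-state transform used in the proof of Theorem \ref{blowupp}, so it meshes naturally with the rest of the paper.

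Two small points. The limit $\Lambda_{N,s}\to\left(\frac{N-2}{2}\right)^2$ as $s\to 1$ does not require the duplication formula: continuity of $\G$ and the recurrence $\G(z+1)=z\,\G(z)$ at $z=\frac{N-2}{4}$ give $2^{2}\,\G^2(\frac{N+2}{4})/\G^2(\frac{N-2}{4})=\left(\frac{N-2}{2}\right)^2$ directly. And the step you rightly flag as the delicate one --- justifying the ground-state identity and passing from $u$ supported away from the origin to all $u\in\mathcal{C}^\infty_0(\ren)$, and then to the completion of the form domain where optimality and non-attainment are discussed --- should be spelled out as a capacity/density argument (valid precisely because $N>2s$, so the origin is negligible for the $H^s$ form); ``standard density arguments'' is the only place where your sketch is materially thinner than the proofs you would be reproducing.
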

This inequality was proved in  \cite{He}. See also \cite{B}, \cite{FLS}, \cite{SW} and \cite{Y}. The reader can find all the details of  a direct  proof in
Section 9.2 of \cite{Peral-Soria}.

Recently, in \cite{AMPP} and  related to the Hardy inequality stated in \eqref{Hardy}, the authors study the fractional parabolic semilinear problem,
\begin{equation}\label{local}
\left\{
\begin{array}{rcll}
 u_t+(-\Delta)^{s} u &=&\l\dfrac{\,u}{|x|^{2s}}+ u^p+ c f &\mbox{ in } \Omega\times (0,T),\\
u(x,t)&>&0 & \inn \Omega\times (0,T),\\ u(x,t)&=& 0 & \inn (\ren\setminus\Omega)\times[ 0,T),\\ u(x,0)&=&u_0(x) &\mbox{ if } x\in\O,
\end{array}
\right.
\end{equation}
where $N> 2s$, $0<s<1$,  $p>1$, $c,\l>0$, and  $u_0\ge 0$, $f\ge 0$ are in a suitable class of functions. By $(-\Delta)^s$ we denote the  fractional Laplacian of
order $2s$, defined in \eqref{fraccionario}. In \cite{AMPP} and \cite{BMP}, the authors prove the existence of a critical power $p_{+}(s,\lambda)$ such that if $p>
p_+(s,\lambda)$, the problem \eqref{local} has no weak positive supersolutions and a phenomenon of \emph{complete and instantaneous blow up} happens. If $p<
p_+(s,\lambda)$, there exists a positive solution  for a suitable class of nonnegative data.

In this note, we deal with the corresponding Fractional Cauchy problem,
\begin{equation}\label{Cauchy}
u_t+(-\Delta)^s u=\lambda\dfrac{u}{|x|^{2s}}+u^{p}\inn\ren\times(0,\infty),\\ u(x,0)=u_{0}(x)\inn\ren,
\end{equation}
with $1<p<p_{+}(s,\lambda)$ in order to find the value of the corresponding Fujita exponent.

The case $s\in (0,1)$ and $\l=0$ was considered in \cite{SS}. The author was able to show that $F(s):=1+\frac{2s}{N}$ is the associated Fujita exponent. See also
\cite{GK} for some extensions.

For $\lambda>0$, any solution to problem \eqref{local} is unbounded close to the origin, even for nice data (see \cite{AMPP}). This is the corresponding nonlocal
version of the Baras-Goldstein results for the heat equation developed in \cite{BaGo}.  Therefore, $L^\infty$-blow-up is instantaneous and free in problem
\eqref{Cauchy} as in the local case and  the blow-up will be also obtained in a suitable Lebesgue space with a weight.

In this work we will treat the case $s\in (0,1)$ and $\l>0$ that is more involved than the local problem for several reasons, one of them that the kernel of the
fractional heat equation has not a closed form with the exception of $s=\frac12$ and $s=1$.

The paper is organized as follows. In Section \ref{sec2} we introduce some tools about the fractional equation. The Fujita exponent  $F(\lambda, s)$ for problem \eqref{Cauchy} in obtained in Section \ref{sub}. Notice that by the Fujita exponent, we understand that, independently of the initial datum, for $1<p<F(\lambda, s)$, any solution to \eqref{Cauchy} blows-up in a certain weighted norm in a finite time. The critical case $p=F(\l,s)$ is analyzed in Subsection \ref{sub1}. In this case we are able to show a blow-up of a precise norm of $u$ that reflects the critical exponent $F(\l,s)$. In Section \ref{global}, for $F(\lambda, s)<p<p_{+}(\lambda, s)$, we prove the existence
of global solutions for suitable data. This shows in some sense the optimality of our blow up results.

\section{Preliminaries tools}\label{sec2}

First, we enunciate some Lemmas and notations that we will use along the paper (see \cite{AMPP} for a proof).
\begin{Lemma} \label{singularity} Let $0<\lambda\leq \Lambda_{N,s}$. Then $v_{\pm\alpha}=|x|^{-\frac{N-2s}{2}\pm\alpha_{\lambda}}$ are
solutions to
\begin{equation}\label{eq:homogenous}
(-\Delta)^s u= \lambda\frac{u}{|x|^{2s}}\inn (\ren\setminus{\{0\}}),
\end{equation}
where $\alpha_{\lambda}$ is obtained by the identity
\begin{equation}\label{lambda}
\lambda=\lambda(\alpha_{\lambda})=\lambda(-\alpha_{\lambda})=\dfrac{2^{2s}\,\Gamma(\frac{N+2s+2\alpha_{\lambda}}{4})\Gamma(\frac{N+2s-2\alpha_{\lambda}}{4})}{\Gamma(\frac{N-2s+2\alpha_{\lambda}}{4})\Gamma(\frac{N-2s-2\alpha_{\lambda}}{4})}.
\end{equation}
\end{Lemma}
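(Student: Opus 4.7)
The plan is to reduce the lemma to the single computation of $(-\Delta)^s$ applied to a pure power $|x|^{-\gamma}$. Both candidate functions $v_{\pm\alpha}$ have the form $|x|^{-\gamma}$ with $\gamma_\pm=\frac{N-2s}{2}\mp\alpha_\lambda$, so by scale-invariance one expects
$$(-\Delta)^s|x|^{-\gamma}=\Phi(\gamma)\,|x|^{-\gamma-2s}\qquad\text{in }\ren\setminus\{0\}$$
for an explicit scalar $\Phi(\gamma)$. Since $|x|^{-\gamma-2s}=|x|^{-\gamma}\,|x|^{-2s}$, identity \eqref{eq:homogenous} then reduces to the scalar equation $\Phi(\gamma)=\lambda$; it remains to compute $\Phi(\gamma)$ and to check that the substitution $\gamma=\frac{N-2s}{2}\mp\alpha_\lambda$ yields exactly the right-hand side of \eqref{lambda}.

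To compute $\Phi(\gamma)$, I would work on the Fourier side, using the classical Riesz distributional identity
$$\mathcal{F}\bigl(|x|^{-\gamma}\bigr)(\xi)=c_{N,\gamma}\,|\xi|^{\gamma-N},\qquad c_{N,\gamma}=\pi^{N/2}\,2^{N-\gamma}\,\frac{\Gamma(\tfrac{N-\gamma}{2})}{\Gamma(\tfrac{\gamma}{2})},$$
valid in $\mathcal{S}'(\ren)$ whenever $0<\gamma<N$. Multiplying by the symbol $|\xi|^{2s}$ and matching with the analogous identity for $|x|^{-\gamma-2s}$ (which also requires $\gamma+2s<N$) gives
$$\Phi(\gamma)=\frac{c_{N,\gamma}}{c_{N,\gamma+2s}}=2^{2s}\,\frac{\Gamma\!\bigl(\tfrac{N-\gamma}{2}\bigr)\,\Gamma\!\bigl(\tfrac{\gamma+2s}{2}\bigr)}{\Gamma\!\bigl(\tfrac{\gamma}{2}\bigr)\,\Gamma\!\bigl(\tfrac{N-\gamma-2s}{2}\bigr)},$$
since the $\pi^{N/2}$ factors cancel and $2^{N-\gamma}/2^{N-\gamma-2s}=2^{2s}$.

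Substituting $\gamma=\frac{N-2s}{2}\mp\alpha_\lambda$, the four Gamma arguments become $\frac{N+2s\pm 2\alpha_\lambda}{4}$, $\frac{N+2s\mp 2\alpha_\lambda}{4}$, $\frac{N-2s\mp 2\alpha_\lambda}{4}$ and $\frac{N-2s\pm 2\alpha_\lambda}{4}$, which is precisely the expression defining $\lambda(\alpha_\lambda)$ in \eqref{lambda}. The symmetry $\alpha_\lambda\leftrightarrow -\alpha_\lambda$ of the resulting expression is evident, so $\lambda(\alpha_\lambda)=\lambda(-\alpha_\lambda)$ and both $v_{+\alpha}$ and $v_{-\alpha}$ solve the same homogeneous equation. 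As a sanity check, at $\alpha_\lambda=0$ the formula collapses to $2^{2s}\Gamma^2(\tfrac{N+2s}{4})/\Gamma^2(\tfrac{N-2s}{4})=\Lambda_{N,s}$, agreeing with Theorem \ref{DH}.

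The main technical point is that $|x|^{-\gamma}$ is not Schwartz, so the Fourier identity must be read in $\mathcal{S}'(\ren)$, and one has to verify that the resulting distribution agrees, on $\ren\setminus\{0\}$, with the pointwise singular integral \eqref{fraccionario}. The necessary range conditions $0<\gamma_\pm<N-2s$ translate to $|\alpha_\lambda|<\tfrac{N-2s}{2}$, which is automatic for $0<\lambda\le\Lambda_{N,s}$ by the definition \eqref{lambda}. An alternative route that avoids distribution theory is to compute \eqref{fraccionario} directly: rotational and scaling invariance factor $|x|^{-\gamma-2s}$ out of the principal value, reducing matters to a one-dimensional Beta-type integral whose value returns the same ratio of Gamma functions via Euler's reflection formula.
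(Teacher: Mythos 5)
Your argument is correct and is essentially the standard proof: the paper itself gives no proof of this lemma (it defers to \cite{AMPP}), and the computation there is precisely the one you outline, namely $(-\Delta)^s|x|^{-\gamma}=\Phi(\gamma)|x|^{-\gamma-2s}$ with $\Phi(\gamma)=2^{2s}\Gamma\bigl(\tfrac{N-\gamma}{2}\bigr)\Gamma\bigl(\tfrac{\gamma+2s}{2}\bigr)/\bigl(\Gamma\bigl(\tfrac{\gamma}{2}\bigr)\Gamma\bigl(\tfrac{N-\gamma-2s}{2}\bigr)\bigr)$, followed by the substitution $\gamma=\tfrac{N-2s}{2}\mp\alpha_\lambda$, which indeed reproduces \eqref{lambda} and its $\alpha_\lambda\leftrightarrow-\alpha_\lambda$ symmetry, with the correct range $0<\gamma_\pm<N-2s$. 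The only point you leave as a remark rather than a proof, and which the cited references do carry out, is the identification of the $\mathcal{S}'$ Fourier computation with the pointwise principal-value operator \eqref{fraccionario} on $\ren\setminus\{0\}$ (equivalently, your alternative direct evaluation of the singular integral); note also that the multiplier is independent of the Fourier normalization, so the paper's $2\pi$-convention causes no discrepancy.
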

\begin{remark}
Notice that $\lambda(\alpha_{\lambda})= \lambda(-\alpha_{\lambda})=m_{\alpha_{\lambda}}m_{-\alpha_{\lambda}}$, with $m_{\alpha_{\lambda}}=
2^{s}\dfrac{\Gamma(\frac{N+2s+2\alpha_{\lambda}}{4})}{\Gamma(\frac{N-2s-2\alpha_{\lambda}}{4})}$.
\end{remark}

\begin{Lemma}\label{LambdaVsSing}
The following equivalence holds true:
$$
0<\lambda(\alpha_{\lambda})=\lambda(-\alpha_{\lambda})\leq \Lambda_{N,s}\mbox{ if and only if } 0\leq \alpha_{\lambda}<\dfrac{N-2s}{2}.
$$
\end{Lemma}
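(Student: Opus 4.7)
The plan is to exhibit $\lambda$ as a continuous strictly decreasing bijection from the interval $[0,(N-2s)/2)$ onto $(0,\Lambda_{N,s}]$. This immediately yields both directions of the equivalence, and $\lambda(\alpha)=\lambda(-\alpha)$ is already manifest from the symmetry of the formula \eqref{lambda}. Four ingredients are needed: positivity of $\lambda(\alpha)$ on the interval, the value $\lambda(0)=\Lambda_{N,s}$, the boundary behavior $\lambda(\alpha)\to 0$ as $\alpha\to((N-2s)/2)^-$, and strict monotonicity on $(0,(N-2s)/2)$.

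The first three are immediate. For $0\le\alpha<(N-2s)/2$ each of the four arguments $(N\pm 2s\pm 2\alpha)/4$ in \eqref{lambda} is strictly positive, so all four Gamma values are positive and finite, giving $\lambda(\alpha)>0$. Direct substitution of $\alpha=0$ yields $\lambda(0)=2^{2s}\Gamma((N+2s)/4)^2/\Gamma((N-2s)/4)^2=\Lambda_{N,s}$. As $\alpha\to ((N-2s)/2)^-$ the denominator factor $\Gamma((N-2s-2\alpha)/4)$ hits the pole of $\Gamma$ at $0$, while the remaining three factors tend to positive finite limits, so $\lambda(\alpha)\to 0^+$.

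The heart of the argument is monotonicity. Taking the logarithmic derivative and rearranging gives
\[
\frac{d}{d\alpha}\log\lambda(\alpha)=\tfrac12\bigl[G(\alpha)-G(-\alpha)\bigr],\qquad G(t):=\psi\!\left(\tfrac{N+2s+2t}{4}\right)-\psi\!\left(\tfrac{N-2s+2t}{4}\right),
\]
where $\psi=\Gamma'/\Gamma$ is the digamma function. Observing that the two arguments of $\psi$ differ by $s$, the classical series identity $\psi(x+s)-\psi(x)=\sum_{n\ge 0}\bigl(\tfrac{1}{n+x}-\tfrac{1}{n+x+s}\bigr)$ applied with $x=(N-2s+2t)/4>0$ expresses $G(t)$ as a sum of strictly positive terms, each strictly decreasing in $x$ and hence in $t$. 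Consequently $G$ itself is strictly decreasing, and $G(\alpha)<G(-\alpha)$ for every $\alpha\in(0,(N-2s)/2)$, so $\lambda$ is strictly decreasing on this interval.

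Combining the four ingredients with the intermediate value theorem produces the claimed bijection $[0,(N-2s)/2)\to(0,\Lambda_{N,s}]$, which is exactly the equivalence in Lemma \ref{LambdaVsSing}. I expect the only genuine obstacle to be the monotonicity step: differentiating and identifying the digamma series is elementary but requires care with the sign conventions. An equally viable alternative is to use the integral representation $\psi(x+s)-\psi(x)=\int_0^1 t^{x-1}\,\tfrac{1-t^s}{1-t}\,dt$, from which differentiation under the integral yields the same monotonicity in a perhaps more transparent way.
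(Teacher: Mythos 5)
Your proof is correct. Bear in mind that this paper does not prove Lemma \ref{LambdaVsSing} at all: it is quoted from \cite{AMPP} (see also \cite{BMP}, \cite{FLS}), where, after the substitution $\gamma=\frac{N-2s}{2}-\alpha_{\lambda}$, it amounts to the known fact that the coefficient $\lambda$ in \eqref{lambda}, viewed as a function of $\gamma\in(0,N-2s)$, is positive, symmetric about the midpoint $\frac{N-2s}{2}$, and attains its maximum $\Lambda_{N,s}$ exactly there; the argument in the literature is the same log-derivative/digamma monotonicity you employ, so your route is the standard one rather than a genuinely different proof. Your computation checks out: $\frac{d}{d\alpha}\log\lambda(\alpha)=\frac12\bigl(G(\alpha)-G(-\alpha)\bigr)$ with $G(t)=\psi(x+s)-\psi(x)$, $x=\frac{N-2s+2t}{4}$, and $x>0$ for all $|t|\le\alpha<\frac{N-2s}{2}$, so the series (or integral) representation indeed makes $G$ strictly decreasing, hence $\lambda$ strictly decreasing on $\bigl(0,\frac{N-2s}{2}\bigr)$; combined with $\lambda(0)=\Lambda_{N,s}$, positivity, and $\lambda\to 0^+$ at the right endpoint, this gives the claimed bijection. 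One interpretative remark: the bijection onto $(0,\Lambda_{N,s}]$ delivers the ``if'' direction together with uniqueness of $\alpha_{\lambda}$ in $\bigl[0,\frac{N-2s}{2}\bigr)$, which is the content the paper actually uses; if one tried to read the ``only if'' direction for arbitrary $\alpha_{\lambda}\ge\frac{N-2s}{2}$, one would have to track the sign changes of the Gamma factors (the quotient vanishes at $\alpha_{\lambda}=\frac{N-2s}{2}$, is negative on $\bigl(\frac{N-2s}{2},\frac{N+2s}{2}\bigr)$, but becomes positive again just beyond $\frac{N+2s}{2}$, where Lemma \ref{singularity} no longer applies), so your formulation as a bijection on the natural parameter range is the correct reading of the equivalence and no genuine gap remains.
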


\begin{remark}\label{gamma1}
Notice that we can explicitly construct two positive solutions to the homogeneous problem \eqref{eq:homogenous}. Henceforth, we denote
\begin{equation}\label{g1}
\mu(\lambda)= \dfrac{N-2s}{2}-\alpha_{\lambda} \hbox{ and } \bar\mu(\lambda)= \dfrac{N-2s}{2}+\alpha_{\lambda},
\end{equation}
with $0<\mu(\lambda)\leq \dfrac{N-2s}{2}\leq\bar\mu(\lambda)<(N-2s)$. Since $N-2\mu(\lambda)-2s={2}\alpha_{\lambda}>0$ and
$N-2\bar\mu(\lambda)-2s=-{2}\alpha_{\lambda}<0$, then $|x|^{-\mu(\lambda)}$ is the unique nonnegative solution that is locally in the energy space.
\end{remark}

The critical existence power  $p_{+}(\lambda,s)$, found in \cite{BMP} and \cite{AMPP},  depends on $s$ and $\lambda$, and in particular satisfies:
$$
p_{+}(\lambda,s):= 1+\dfrac{2s}{\frac{N-2s}{2}-\alpha_{\lambda}}=1+\dfrac{2s}{\mu (\lambda)}.
$$
Note that if $\lambda=\Lambda_{N,s}$, namely, $\alpha_{\lambda}=0$, then $p_{+}(\lambda,s)= \dfrac{N+2s}{N-2s}=2^{*}_{s}-1$, and if $\lambda=0$, namely,
$\alpha_{\lambda}=\dfrac{N-2s}{2}$, then $p_{+}(\lambda,s)=\infty$. Noting
$$
p_{-}(\lambda,s)= 1+\dfrac{2s}{\frac{N-2s}{2}+\alpha_{\lambda}}=1+\dfrac{2s}{\bar{\mu} (\lambda)},
$$
it follows that for $\lambda=\Lambda_{N,s}$, namely, $\alpha_{\lambda}=0$, then $p_{-}(\lambda, s)= 2^{*}_{s}-1$ and if $\lambda=0$, namely,
$\alpha_{\lambda}=\dfrac{N-2s}{2}$, then $p_{-}(\lambda,s)=\dfrac{N}{N-2s}$. Hence,
$$
\dfrac{N}{N-2s}\leq p_{-}(\lambda, s)\leq 2^{*}_{s}-1\leq p_{+}(\lambda,s)\leq \infty.
$$

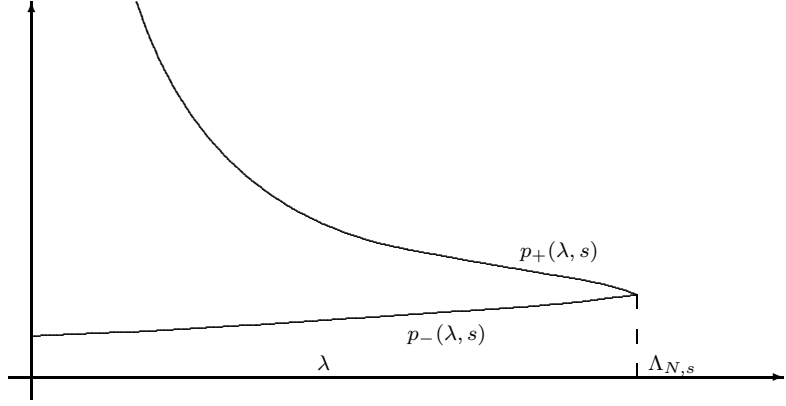
\begin{figure}
\begin{center}
\setlength{\unitlength}{1mm}
\begin{picture}(100,45)

\put(-3,0){\vector(1,0){103}} \put(0,-3){\vector(0,1){53}}

\put(65,16){\footnotesize$\dyle p_{+}(\lambda,s)$} \put(50,5){\footnotesize$\dyle p_{-}(\lambda,s)$} \put(38,1){\footnotesize$\dyle \lambda$}
\put(82,1){\footnotesize$\dyle \Lambda_{N,s}$}

\dashline[3]{2}(80.5,11)(80.5,0) \qbezier(0,5.5)(15,6)(30,7) \qbezier(30,7)(45,8)(60,9) \qbezier(60,9)(70,9.5)(80.5,11)

\qbezier(14,50)(23,22)(50,17) \qbezier(50,17)(60,15.14)(70,13.5) \qbezier(80.5,11)(77,12.5)(70,13.5)

\end{picture}
\end{center}
\caption{Fujita exponent for fractional Cauchy problem with Hardy potential.}
\end{figure}

\section{Blow up result for the Cauchy problem.}\label{sub}
It is clear that $L^\infty$-blow-up is instantaneous and free in problem \eqref{Cauchy} because the solutions are unbounded at the origin.

Before staring the main blow-up result we begin by precising the sense for which blow up is considered. As in the case $s=1$, $\l>0$, this phenomenon will be
analyzed in a suitable weighted Lebesgue space.
\begin{Definition}\label{def:blow-sense}
Consider $u(x, t)$ a positive solution to \eqref{Cauchy}, then we say that $u$  blows-up in a finite time if there exists $T^{*}<\iy$ such that
$$
\lim\limits_{t\to T^{*}}\dint_{\ren}|x|^{-\mu(\lambda)}u(x,t)\,dx=\iy,
$$
with $\mu(\lambda)= \dfrac{N-2s}{2}-\alpha_{\lambda}$.
\end{Definition}

The next proposition justifies in some sense the previous definition.
\begin{Proposition}
Let $\l\le \Lambda_{N,s}$ and consider $u$ to be a nonnegative solution to problem \eqref{Cauchy}, then
$$
\dint_{B_{r}(0)}|x|^{-\mu(\l)}u_{0}(x)\,dx<\iy, \hbox{  for some }r>0.
$$
In particular, for all $t\in (0,T)$, we have
$$
\dint_{B_{r}(0)}|x|^{-\mu(\l)}u(x,t)\,dx<\iy.
$$
\end{Proposition}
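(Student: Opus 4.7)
The strategy is to pair the equation against a regularization of the weight $\varphi(x):=\eta(x)|x|^{-\mu(\lambda)}$, where $\eta\in C_c^\infty(\ren)$ is a cutoff with $\eta\equiv 1$ on $B_r$ and $\operatorname{supp}\eta\subset B_{2r}$. The motivation is Lemma \ref{singularity}, by which $(-\Delta)^s|x|^{-\mu(\lambda)}=\lambda|x|^{-\mu(\lambda)-2s}$ pointwise away from the origin; testing \eqref{Cauchy} against $\varphi$ thus makes the linear Hardy contribution cancel outside the cutoff regions, leaving only a controllable remainder.

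I would first verify that
$g(x):=(-\Delta)^s\varphi(x)-\lambda\varphi(x)/|x|^{2s}$
satisfies $|g(x)|\le C(\eta,r)(1+|x|)^{-N-2s}$ uniformly on $\ren$. Splitting into three regions: on $B_{r/2}$ where $\eta\equiv 1$, formula \eqref{fraccionario} and Lemma \ref{singularity} give
$(-\Delta)^s\varphi(x)=\lambda|x|^{-\mu(\lambda)-2s}+a_{N,s}\!\int_{|y|\ge r}(1-\eta(y))|y|^{-\mu(\lambda)}|x-y|^{-N-2s}dy$, and the tail integral is bounded because $\mu(\lambda)+2s>0$. On $B_{2r}\setminus B_{r/2}$, $\varphi$ is smooth and $g$ is manifestly continuous. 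On $\ren\setminus B_{2r}$, $\varphi\equiv 0$ and $(-\Delta)^s\varphi(x)=O(|x|^{-N-2s})$ from the integral representation.

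To use $\varphi$ as a test function one must regularize near the origin: set $\varphi_\delta:=\eta\,\xi_\delta|x|^{-\mu(\lambda)}$ with $\xi_\delta\in C^\infty(\ren)$ vanishing on $B_{\delta/2}$ and equal to $1$ on $\ren\setminus B_\delta$, so that $\varphi_\delta\uparrow\varphi$ pointwise as $\delta\to 0^+$. An analogous pointwise analysis, combined with the condition $N-\mu(\lambda)-2s>0$ implied by Remark \ref{gamma1}, shows that the associated remainder $g_\delta:=(-\Delta)^s\varphi_\delta-\lambda\varphi_\delta/|x|^{2s}$ is uniformly controlled in the form $|g_\delta|\le|g|+h_\delta$ with $\int_0^t\!\!\int_\ren u\,h_\delta\,dx\,d\tau\to 0$. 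Plugging $\varphi_\delta$ into the weak formulation of \eqref{Cauchy} with a time-independent profile yields, for every $t\in(0,T)$,
$$\int_\ren u(x,t)\varphi_\delta\,dx\;+\;\int_0^t\!\!\!\int_\ren u\,g_\delta\,dx\,d\tau\;=\;\int_\ren u_0\,\varphi_\delta\,dx\;+\;\int_0^t\!\!\!\int_\ren u^p\,\varphi_\delta\,dx\,d\tau,$$
and nonnegativity of $u$, $u^p$, $\varphi_\delta$ together with the uniform bound on $g_\delta$ forces both sides to stay bounded as $\delta\to 0^+$. Monotone convergence then gives simultaneously the finiteness of $\int_\ren u_0\,\varphi\,dx$ and $\int_\ren u(\cdot,t)\,\varphi\,dx$, hence both $\int_{B_r}|x|^{-\mu(\lambda)}u_0\,dx<\infty$ and $\int_{B_r}|x|^{-\mu(\lambda)}u(x,t)\,dx<\infty$. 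The main obstacle is the uniform-in-$\delta$ control of $g_\delta$ on the transition layer near the origin: this is precisely where the explicit identity of Lemma \ref{singularity} is essential, since without the exact cancellation the $|x|^{-\mu(\lambda)-2s}$-scale contributions of $(-\Delta)^s\varphi_\delta$ and $\lambda\varphi_\delta/|x|^{2s}$ would each blow up and fail to compensate in the limit.
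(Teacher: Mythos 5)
There is a genuine gap at the decisive step. After inserting $\varphi_\delta$ in the weak formulation you obtain the identity
$$\int_{\ren} u(x,t)\varphi_\delta\,dx+\int_0^t\!\!\int_{\ren} u\,g_\delta\,dx\,d\tau=\int_{\ren} u_0\varphi_\delta\,dx+\int_0^t\!\!\int_{\ren} u^p\varphi_\delta\,dx\,d\tau,$$
and you claim that nonnegativity plus the uniform bound on $g_\delta$ ``forces both sides to stay bounded as $\delta\to 0^+$''. It does not: the terms $\int u(\cdot,t)\varphi_\delta\,dx$ and $\int u_0\varphi_\delta\,dx+\iint u^p\varphi_\delta$ are all nonnegative and monotone increasing as $\delta\downarrow 0$, and the identity is perfectly consistent with both sides tending to $+\infty$ simultaneously; a bounded coupling term $\iint u\,g_\delta$ cannot prevent this. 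Nowhere do you use the finiteness of $u$ itself (pointwise or in a local sense), which is exactly the ingredient that must be played against the singular weight. The statement is really a ``no mass at the origin escapes'' result: one needs a \emph{lower} bound for the evolution, of the type $u(x_0,t)\gtrsim\int_{B_r}K(x_0,y,t)u_0(y)\,dy$ with $K(x_0,y,t)\gtrsim |y|^{-\mu(\l)}$ near $y=0$, so that $\int_{B_r}|y|^{-\mu(\l)}u_0\,dy=\infty$ would contradict $u(x_0,t)<\infty$; the same argument restarted at time $t$ gives the second assertion. This is what the paper's one-line proof invokes: the approximation scheme of \cite{APP} and \cite{AMPP} (truncated potentials $\l/(|x|^{2s}+\frac1n)$ or truncated data, monotone limits of the approximate solutions, and the local behaviour $|x|^{-\mu(\l)}$ of the associated kernel/ground state), which produces precisely that lower bound. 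Your purely dual, upper-bound identity cannot replace it.

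A secondary, but also real, problem is the uniform control of $g_\delta$ in the transition layer $\{\delta/2\le|x|\le\delta\}$. There the exact cancellation of Lemma \ref{singularity} is unavailable because $\varphi_\delta\neq\eta|x|^{-\mu(\l)}$, and both $(-\Delta)^s\varphi_\delta$ and $\l\varphi_\delta|x|^{-2s}$ are of size $\delta^{-\mu(\l)-2s}$ with no sign you can exploit; for $|x|\le\delta/2$ one even has $g_\delta=(-\Delta)^s\varphi_\delta\sim-\delta^{-\mu(\l)-2s}$. Hence the claim $\int_0^t\!\int u\,h_\delta\,dx\,d\tau\to 0$ requires quantitative information on $u$ near the origin (essentially $u\lesssim|x|^{-\mu(\l)}$-type bounds), i.e. it presupposes a version of the integrability you are trying to prove, so as written the argument is circular at this point as well.
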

The proof follows combining the approximating arguments used in \cite{APP} and \cite{AMPP}.

The main blow up result of this section is the following.
\begin{Theorem}\label{blowup}
Suppose that $1<p<F(\l,s):=1+\dfrac{2s}{N-\mu(\l)}$ and let $u$ be a positive solution to problem \eqref{Cauchy}. Then there exists $T^*:=T^*(u_0)$ such that
$$
\lim\limits_{t\to T^{*}}\dint_{\ren}|x|^{-\mu(\l)}u(x,t)\,dx=\iy.
$$
\end{Theorem}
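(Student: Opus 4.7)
The plan is to derive a Bernoulli-type differential inequality for the functional
$$J(t) := \int_{\mathbb{R}^N} |x|^{-\mu(\lambda)}\, u(x,t)\, dx,$$
which is finite by the previous proposition, and then show that the exponent condition $p<F(\lambda,s)$ forces this inequality to blow up in finite time. The guiding idea is to treat $\varphi(x):=|x|^{-\mu(\lambda)}$ as a ``ground state'' for the Hardy--fractional operator $\mathcal{L}:=(-\Delta)^s-\lambda |x|^{-2s}$, exactly as in the local case \cite{APP}.

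The first step is to differentiate $J(t)$ and use the equation to get
$$J'(t)=-\int_{\mathbb{R}^N}\varphi\,(-\Delta)^s u\,dx+\lambda\int_{\mathbb{R}^N}\frac{u\varphi}{|x|^{2s}}\,dx+\int_{\mathbb{R}^N}\varphi\, u^p\,dx.$$
By Lemma \ref{singularity} the test function $\varphi$ solves $(-\Delta)^s\varphi=\lambda\varphi/|x|^{2s}$ on $\mathbb{R}^N\setminus\{0\}$, so by the self-adjoint bilinear form of $(-\Delta)^s$ the first two terms formally cancel, yielding
$$J'(t)\;\geq\;\int_{\mathbb{R}^N}|x|^{-\mu(\lambda)}\,u^p(x,t)\,dx.$$
This cancellation is the core technical point: since $\varphi\notin H^s(\mathbb{R}^N)$, one must work with truncations $\varphi_{\varepsilon,R}$ (cut-off near the origin and at infinity), use the identity against these regularised weights, and pass to the limit using the a priori integrability $|x|^{-\mu(\lambda)}u\in L^1_{\rm loc}$ together with the sharp bound $u(x,t)\lesssim |x|^{-\mu(\lambda)}$ near the origin established in \cite{AMPP}.

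The second step is to turn the right-hand side into a power of $J(t)$. For any $R>0$ the weighted H\"older inequality gives
$$\int_{B_R}|x|^{-\mu(\lambda)}u\,dx\;\leq\;\Big(\int_{B_R}|x|^{-\mu(\lambda)}u^p\,dx\Big)^{1/p}\Big(\int_{B_R}|x|^{-\mu(\lambda)}\,dx\Big)^{1-1/p},$$
and the last integral equals $C_N R^{N-\mu(\lambda)}$ since $\mu(\lambda)<N$. The natural scaling $R(t)=t^{1/(2s)}$ of the fractional heat flow — which is also the scaling invariant for $\mathcal{L}$ thanks to the homogeneity of $|x|^{-2s}$ — is used to compare the truncated integral with $J(t)$: by a Duhamel lower bound for the semigroup generated by $\mathcal{L}$ applied to $u_0$, one shows that for $t\geq t_0$ one has $\int_{B_{R(t)}}|x|^{-\mu(\lambda)}u(x,t)\,dx\geq \tfrac12 J(t)$. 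Inserting $R=t^{1/(2s)}$ yields
$$J'(t)\;\geq\;c\,t^{-\alpha}\,J(t)^p,\qquad \alpha:=\frac{(N-\mu(\lambda))(p-1)}{2s}.$$

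Finally, the assumption $p<F(\lambda,s)=1+\tfrac{2s}{N-\mu(\lambda)}$ is exactly the condition $\alpha<1$. Separating variables in the Bernoulli inequality,
$$\frac{d}{dt}\bigl(J(t)^{1-p}\bigr)\;\leq\;-c(p-1)\,t^{-\alpha},$$
and integrating from $t_0$,
$$J(t)^{1-p}\;\leq\;J(t_0)^{1-p}-\frac{c(p-1)}{1-\alpha}\bigl(t^{1-\alpha}-t_0^{1-\alpha}\bigr),$$
so the right-hand side becomes negative at some finite $T^\ast>t_0$, forcing $J(t)\to\infty$ as $t\to T^\ast$. The main obstacle in the whole argument is the rigorous justification of the integration by parts in the first step, because the weight $|x|^{-\mu(\lambda)}$ sits precisely at the threshold of the fractional Hardy inequality and is only ``formally'' an $\mathcal{L}$-harmonic function; controlling the commutator errors produced by the non-locality of $(-\Delta)^s$ as the cut-offs are removed is the delicate point and relies on the asymptotic behaviour of $u$ near the singularity derived in \cite{AMPP}.
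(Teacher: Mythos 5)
Your overall strategy (test against the ground state $|x|^{-\mu(\l)}$, derive a Bernoulli-type differential inequality, read off the restriction $p<F(\l,s)$ from a scaling exponent) has the right skeleton, but two key steps are asserted rather than proved, and I do not see how to repair them as written. The decisive gap is the claim that $\int_{B_{R(t)}}|x|^{-\mu(\l)}u(x,t)\,dx\ge\tfrac12 J(t)$ with $R(t)=t^{1/(2s)}$ ``by a Duhamel lower bound''. A Duhamel or comparison argument bounds $u$ from below by the linear evolution of $u_0$, hence bounds the \emph{interior} mass from below; it gives no upper bound on the exterior mass relative to the total, which is what your inequality requires. Since the fractional kernel has heavy tails of order $t|x|^{-N-2s}$ and the Hardy and nonlinear terms only push $u$ up, there is no reason the weighted mass concentrates in the parabolic ball; for slowly decaying data the exterior part can dominate. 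Without this concentration step, your H\"older inequality on $B_R$ yields no usable lower bound for $\irn |x|^{-\mu(\l)}u^p\,dx$ in terms of $J(t)$, and the differential inequality $J'\ge c\,t^{-\alpha}J^p$ never materializes. The second gap is the first step itself: $\varphi=|x|^{-\mu(\l)}$ neither decays at infinity nor has finite energy, so $J(t)$ need not even be finite (the Proposition in the paper only gives \emph{local} finiteness, $\int_{B_r}|x|^{-\mu(\l)}u\,dx<\iy$), the terms at infinity in the nonlocal integration by parts are not controlled, and the upper bound $u(x,t)\lesssim|x|^{-\mu(\l)}$ near the origin that you invoke from \cite{AMPP} is not available there: what is proved is a \emph{lower} bound of that size (the nonlocal Baras--Goldstein phenomenon), which does not help you remove the cut-offs.

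Both difficulties are precisely what the paper's proof is engineered to avoid. Instead of the bare weight, it tests with $\psi_\eta(x)=\hat h(x,1/\eta)=\eta^{\frac{N}{2s}-\frac{\mu(\l)}{s}}|x|^{-\mu(\l)}H(\eta^{\frac{1}{2s}}|x|)$, i.e.\ your $\varphi$ damped by the self-similar profile $H$ of the fractional heat kernel. This function is integrable, so $\irn u\,\psi_\eta\,dx$ is a finite, legitimate quantity, and Jensen's inequality converts $\irn u^p\psi_\eta\,dx$ into $C\eta^{(p-1)\frac{\mu(\l)}{2s}}\big(\irn u\,\psi_\eta\,dx\big)^p$; moreover the pointwise inequality $-(-\Delta)^s\psi_\eta+\l\,\psi_\eta/|x|^{2s}\ge-\frac{N}{2s}\,\eta\,\psi_\eta$ follows from the sign of the nonlocal cross term (both $|x|^{-\mu(\l)}$ and $H$ are radially decreasing) together with $H'\le0$, with no asymptotics of $u$ and no cut-off removal needed. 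The resulting ODE for $Y(t)=e^{\frac{N}{2s}\eta t}\irn u\,\psi_\eta\,dx$ blows up in finite time provided a smallness condition in $\eta$ holds, and letting $\eta\to0$ is exactly where $p<F(\l,s)$ enters; this plays the role your scaling $R=t^{1/(2s)}$ was meant to play, but with a proof. To salvage your route you would need to replace the concentration claim by such a rescaled, integrable test function (or an equivalent rescaled test-function argument in the spirit of Mitidieri--Pohozaev), since the statement you need does not follow from semigroup lower bounds.
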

Before proving Theorem \ref{blowup}, we need some analysis related the fractional heat equation.

Let $h(x,t)$ be the \textit{fractional Heat Kernel}, namely,
$$
h_t+(-\Delta)^{s}h=0\inn\mathbb{R}^{N}\times (0,\infty),\, \,h(x,0)=\delta_{0}.
$$
There is no known closed form for $h(t,x)$ in real variables. However, in Fourier variables it is simply $\mathcal{F}({h})(t,\xi)=e^{-t|2\pi\xi|^{2s}}$. The
properties of the kernel $h$ were studied in \cite{Pol} for $N=1$ and in \cite{Bluget} for all dimensions. More precisely,

\textit{since $h(x, t)$ is defined by
\begin{equation}\label{kernel}
h(x, t)=\int_{\mathbb{R}^N} e^{2\pi  i\langle x,\xi\rangle}e^{-(2\pi|\xi|)^{2s}t}d\xi.
\end{equation}
where $0<s<1$ and  $N\ge 2s$, then there exists a constant  $C>1$ such that
\begin{equation}\label{P}
\frac{1}{C}\frac{1}{(1+|x|^{N+\alpha})}\leq h(x, 1)\leq \frac{C}{(1+|x|^{N+\alpha})}, \quad \hbox{ for all } x\in \mathbb{R}^N.
\end{equation}}

 There is   a direct  approach  inside of the Real Analysis field
 and even without using Bessel functions. Such a proof is based on a
 celebrated result  by S. N. Bernstein about the characterization of \textit{completely monotone functions}
 via Laplace transform. See Section 12.5 of \cite{Peral-Soria} for a detailed proof.

Notice that $h(x,t)=t^{-\frac{N}{2s}}H(\frac{|x|}{t^\frac{1}{2s}})$  and $H$ is a decreasing function that satisfies
$$
H(\s)\approx \frac{1}{(1+\s^2)^{\frac{N+2s}{2}}}, |H'(\s)|\le \frac{C}{(1+\s^2)^{\frac{N+2s+1}{2}}},
$$
with
$$
2s(-\D)^s H=NH+r H'.
$$
See for instance \cite{CF} and \cite{S}. We set
$$
\hat{h}(x,t)=\bigg(\frac{|x|}{t^\frac{1}{s}}\bigg)^{-\mu(\l)}h(x,t)\equiv
\bigg(\frac{|x|}{t^\frac{1}{s}}\bigg)^{-\mu(\l)}t^{-\frac{N}{2s}}H(\frac{|x|}{t^\frac{1}{2s}}).
$$
Notice that  we have the elementary formula,
$$
(-\D)^s (w\,v)=v(-\D)^sw +w(-\D)^sv-\irn\frac{(w(x)-w(y))(v(x)-v(y))}{|x-y|^{N+2s}}dy.
$$
Hence, for $t>0$, we have
\begin{eqnarray*}
(-\Delta)^{s}(\hat{h}(x,t)) &= & \bigg(\frac{|x|}{t^\frac{1}{s}}\bigg)^{-\mu(\l)}(-\Delta)^{s} h(x,t)+ h(x,t)(-\Delta)^{s}
\bigg(\frac{|x|}{t^\frac{1}{s}}\bigg)^{-\mu(\l)}\\ &-& \dyle \dint_{\mathbb{R}^{N}}
\dfrac{\Big(\bigg(\frac{|x|}{t^\frac{1}{s}}\bigg)^{-\mu(\l)}-\bigg(\frac{|y|}{t^\frac{1}{s}}\bigg)^{-\mu(\l)} \Big)\Big(h(x,t)-h(y,t)\Big)}{|x'-y|^{N+2s}} \,dy.
\end{eqnarray*}
Notice that
$$
\Big(\bigg(\frac{|x|}{t^\frac{1}{s}}\bigg)^{-\mu(\l)}-\bigg(\frac{|y|}{t^\frac{1}{s}}\bigg)^{-\mu(\l)}
\Big)\Big(h(x,t)-h(y,t)\Big)=t^{\frac{\mu(\l)}{s}-\frac{N}{2s}}\big(|x|^{-\mu(\l)}-|y|^{-\g}\big)\big(H(\frac{|x|}{t^\frac{1}{2s}})-H(\frac{|y|}{t^\frac{1}{2s}})\big)\ge
0.
$$
Thus
\begin{eqnarray*}
(-\Delta)^{s}(\hat{h}(x,t)) &\le & \bigg(\frac{|x|}{t^\frac{1}{s}}\bigg)^{-\mu(\l)}(-\Delta)^{s} h(x,t)+ h(x,t)(-\Delta)^{s}
\bigg(\frac{|x|}{t^\frac{1}{s}}\bigg)^{-\mu(\l)}\\ &= & \bigg(\frac{|x|}{t^\frac{1}{s}}\bigg)^{-\mu(\l)}(-h_t(x,t))+ \frac{\l
h(x,t)}{|x|^{2s}}\bigg(\frac{|x|}{t^\frac{1}{s}}\bigg)^{-\mu(\l)}\\ &= & \frac{N}{2s}
\frac{\hat{h}(x,t)}{t}+\frac{1}{2s}|x|t^{-\frac{N}{2s}-\frac{1}{2s}-1}H'\bigg(\frac{|x|}{t^\frac{1}{2s}}\bigg)\bigg(\frac{|x|}{t^\frac{1}{s}}\bigg)^{-\mu(\l)}+\frac{\l
\hat{h}(x,t)}{|x|^{2s}}\\ &\le & \frac{N}{2s} \frac{\hat{h}(x,t)}{t} +\frac{\l \hat{h}(x,t)}{|x|^{2s}},\\
\end{eqnarray*}
where we have used the fact that $H'\le 0$. Thus
 $$
-(-\Delta)^{s}(\hat{h}(x,t))+\frac{\l \hat{h}(x,t)}{|x|^{2s}}\ge -\frac{N}{2s} \frac{\hat{h}(x,t)}{t}.
 $$
We are now in position to prove Theorem \ref{blowup}.

{\bf Proof of Theorem \ref{blowup}.} We follow closely some arguments developed in \cite{APP}, see also \cite{QS}. Let $u$ be a positive solution to
\eqref{Cauchy}. Fix $\eta>0$ to be chosen later and define the function $\psi_\eta$
$$
\psi_\eta(x)=\hat{h}(x,\frac{1}{\eta})= \eta^{\frac{N}{2s}-\frac{\mu(\l)}{s}}|x|^{-\mu(\l)} H(\eta^\frac{|x|}{2s}),$$ then by the previous computation it holds
that
 $$
-(-\Delta)^{s} \psi_\eta(x)+\frac{\l \psi_\eta(x)}{|x|^{2s}}\ge -\frac{N}{2s}\eta \psi_\eta(x).
 $$
Notice that
$$
\irn \psi_\eta(x) dx=C\eta^{-\frac{\mu(\l)}{2s}}.
$$
Now, using $\psi_\eta$ as a test function in \eqref{Cauchy}, we get
$$
\dfrac{d}{dt}\irn u\psi_\eta dx=\irn u^p\psi_\eta dx +\irn \bigg(-(-\Delta)^{s} \psi_\eta(x)+\frac{\l \psi_\eta(x)}{|x|^{2s}}\bigg)u dx.
$$
Thus
$$
\dfrac{d}{dt}\irn u\psi_\eta dx\ge \irn u^p\psi_\eta dx -\frac{N}{2s} \eta \irn \psi_\eta(x)u dx.
$$
Using Jensen inequality, there results that
$$
\irn u^p\psi_\eta dx\ge C\eta^{(p-1)\frac{\mu(\l)}{2s}}\bigg(\irn u\psi_\eta dx\bigg)^p.
$$
Then
$$
\dfrac{d}{dt}\irn u\psi_\eta dx+\frac{N}{2s} \eta \irn \psi_\eta(x)u dx\ge C\eta^{(p-1)\frac{\mu(\l)}{2s}}\bigg(\irn u\psi_\eta dx\bigg)^p.
$$
Setting $$Y(t)=e^{\frac{N}{2s}\eta t}\irn u\psi_\eta dx,$$ it follows that
$$
Y'(t)\ge C \eta^{(p-1)\frac{\mu(\l)}{2s}}e^{-(p-1)\frac{N}{2s} \eta t}Y^p(t).
$$
Integrating the previous differential inequality, we arrive to
\begin{eqnarray*}
\frac{1}{p-1}\big(\frac{1}{Y^{p-1}(0)}-\frac{1}{Y^{p-1}(t)}\big) &\ge & C
\eta^{(p-1)\frac{\mu(\l)}{2s}}\frac{1}{(p-1)\frac{N}{2s}\eta}\big(1-e^{-(p-1)\frac{N}{2s} \eta t}\big)\\ &\ge &
\frac{C}{\frac{N}{2s}(p-1)}\eta^{(p-1)\frac{\mu(\l)}{2s}-1}\big(1-e^{-(p-1)\frac{N}{2s} \eta t}\big).
\end{eqnarray*}
Therefore,
$$
Y^{p-1}(t)\ge \dfrac{1}{\bigg(\frac{1}{Y^{p-1}(0)}-{C}{\frac{2s}{N}}\eta^{(p-1)\frac{\mu(\l)}{2s}-1}(1-e^{-(p-1)\frac{N}{2s} \eta t})\bigg)}.
$$
It is clear that, if for some $T<\infty$, we have
\begin{equation}\label{condim}
\frac{1}{Y^{p-1}(0)}\le {C}\frac{2s}{N}\eta^{(p-1)\frac{\mu(\l)}{2s}-1}(1-e^{-(p-1)\frac{N}{2s} \eta T}),
\end{equation}
then $Y(T)=\infty$.

Since $(1-e^{-(p-1)\frac{N}{2s}\eta T})\to 1$ as $T\to \infty$, then condition \eqref{condim} holds if
$$
Y^{p-1}(0)>\frac{1}{C}\frac{N}{2s}\eta^{1-(p-1)\frac{\mu(\l)}{2s}}.
$$
Hence
$$
\eta^{(p-1)(\frac{N}{2s}-\frac{\mu(\l)}{s})}\bigg(\irn u_0(x)|x|^{-\mu(\l)}H(\eta^\frac{1}{2s}
|x|)dx\bigg)^{p-1}>\frac{1}{C}\frac{N}{2s}\eta^{1-(p-1)\frac{\mu(\l)}{2s}},
$$
and then
\begin{equation}\label{main0}
\bigg(\irn u_0(x)|x|^{-\mu(\l)}H(\eta^\frac{1}{2s}|x|)dx\bigg)^{p-1}>\frac{1}{C}\frac{N}{2s} \eta^{-(p-1)(\frac{N}{2s}-\frac{\mu(\l)}{2s})+1}.
\end{equation}
It is clear that \eqref{main0} holds for $\eta$ small if and only if
$$
-(p-1)(\frac{N}{2s}-\frac{\mu(\l)}{2s})+1>0
$$
and then $p<F(\l,s)$.

Since
$$\irn u_0(x)|x|^{-\mu(\l)} dx>C_0,$$
using the fact that $H$ is bounded, there exists $\eta>0$ such that
\begin{equation}\label{main002}
\bigg(\irn u_0(x)|x|^{-\mu(\l)}H(\eta^\frac{1}{2s}|x|)dx\bigg)^{p-1}\ge 2s{N}{C}\eta^{-(p-1)(2s{N}{\beta}-\frac{\mu(\l)}{2s})+1}.
\end{equation}
Hence the result follows. \cqd

\subsection{The critical case}\label{sub1}

Notice that the above argument does not hold for the critical case $p=F(\l,s)$. Hence in this case we will use a different argument based on a suitable apriori
estimates as in \cite{MP} and \cite{GK}. More precisely we have

\begin{Theorem}\label{blowupp}
Assume that $p=F(\l,s):=1+\dfrac{2s}{N-\mu(\l)}$. If $u$ is a positive solution to problem \eqref{Cauchy}, then there exists $T^*:=T^*(u_0)$ such that
$$
\lim\limits_{t\to T^{*}}\dint_{\ren}|x|^{-\mu(\l)}u^p(x,t)\,dx=\iy.
$$
\end{Theorem}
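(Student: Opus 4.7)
The plan is to adapt the test-function argument of Theorem \ref{blowup} by exploiting the scaling criticality at $p=F(\lambda,s)$, passing from the $L^1$-weighted norm to the $L^p$-weighted norm that appears in the statement.

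First I would recycle the weighted test function $\psi_\eta(x)=\hat h(x,1/\eta)$ together with the pointwise inequality
$$-(-\Delta)^s\psi_\eta(x)+\l\,\frac{\psi_\eta(x)}{|x|^{2s}}\;\geq\;-\frac{N}{2s}\,\eta\,\psi_\eta(x)$$
established before Theorem \ref{blowup}. Testing \eqref{Cauchy} against $\psi_\eta$ gives the same differential inequality
$$\frac{d}{dt}\irn u\,\psi_\eta\,dx+\frac{N}{2s}\,\eta\irn u\,\psi_\eta\,dx\;\geq\;\irn u^p\,\psi_\eta\,dx.$$
The obstruction that prevents a direct repetition of the previous proof is that at $p=F(\l,s)$ the exponent $1-(p-1)(N-\mu(\l))/(2s)$ appearing in \eqref{main0} vanishes identically, so the scaling in $\eta$ yields no gain: one can no longer ``make $\eta$ small'' to cross a threshold, and the $L^1$-weighted blow-up of Theorem \ref{blowup} is no longer available.

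The remedy is to argue by contradiction on the weighted $L^p$ norm itself. I would assume that $u$ is a positive solution for which $J(t):=\irn|x|^{-\mu(\l)}u^p(x,t)\,dx$ remains bounded on an arbitrarily large interval $[0,T]$. Using the pointwise bound $\psi_\eta(x)\leq C\,\eta^{N/(2s)-\mu(\l)/s}|x|^{-\mu(\l)}$ one controls the nonlinear term by
$$\irn u^p\psi_\eta\,dx\;\leq\;C\,\eta^{\frac{N}{2s}-\frac{\mu(\l)}{s}}\,J(t),$$
so the differential inequality forces $\irn u\psi_\eta\,dx$ to grow at most exponentially with rate $O(\eta)$. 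On the other hand, Jensen's inequality together with the identity $\irn\psi_\eta\,dx=C\eta^{-\mu(\l)/(2s)}$ still gives the super-linear control
$$\irn u^p\psi_\eta\,dx\;\geq\;C\,\eta^{(p-1)\mu(\l)/(2s)}\Bigl(\irn u\psi_\eta\,dx\Bigr)^p.$$
Iterating these two bounds and time-averaging over $[0,T]$ at the critical scaling $p-1=2s/(N-\mu(\l))$ makes the two estimates incompatible for $T$ large enough, yielding the blow-up of $J$ in finite time.

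The main obstacle is precisely the fact that every $\eta$-power which provided the subcritical gap degenerates at $p=F(\l,s)$: the only remaining lever is a logarithmic/iterative refinement. Following the strategy of \cite{MP} and \cite{GK}, I would implement this via a restart procedure: replace the initial datum by $u(\cdot,t_0)$ for $t_0>0$ and exploit the lower bound $u(x,t_0)\geq c\int h_\l(x,y,t_0)u_0(y)\,dy$ (with $h_\l$ the heat kernel of $(-\Delta)^s-\l/|x|^{2s}$) to force the shifted weighted mass $\irn u(x,t_0)|x|^{-\mu(\l)}\,dx$ to grow. After finitely many such iterations the shifted datum is large enough, in a sense quantified by the critical identity above, to trigger blow-up of $J(t)$ at some finite $T^{*}(u_0)$, delivering the statement. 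The technical heart of the argument is the sharp a priori estimate on the mixed integrals $\int\psi_\eta^{p'}|x|^{\mu(\l)(p'-1)}dx$ at $p=F(\l,s)$, whose scaling in $\eta$ is what makes the iteration terminate.
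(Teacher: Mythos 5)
Your strategy is not the paper's and, as written, it has genuine gaps at exactly the points where the critical case is hard. First, the claim that the differential inequality ``forces $\irn u\psi_\eta\,dx$ to grow at most exponentially with rate $O(\eta)$'' is not available: the computation preceding Theorem \ref{blowup} only yields the \emph{lower} bound $-(-\Delta)^s\psi_\eta+\l\psi_\eta/|x|^{2s}\ge -\frac{N}{2s}\eta\,\psi_\eta$, and to bound $\frac{d}{dt}\irn u\psi_\eta\,dx$ from above you would also need an upper bound on $\irn\big(-(-\Delta)^s\psi_\eta+\l\psi_\eta|x|^{-2s}\big)u\,dx$, which neither you nor the paper establishes (the Hardy term is positive and a priori uncontrolled). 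Second, the decisive step --- ``iterating these two bounds and time-averaging over $[0,T]$ at the critical scaling makes the two estimates incompatible for $T$ large enough'' --- is asserted, not proved; at $p=F(\l,s)$ every $\eta$-power that produced the subcritical gap in \eqref{main0} degenerates to a constant, so combining your upper bound $\irn u^p\psi_\eta\,dx\le C\eta^{\frac{N}{2s}-\frac{\mu(\l)}{s}}J(t)$ with the Jensen lower bound gives no visible contradiction without a new quantitative input. Third, the ``restart procedure'' relies on a pointwise lower bound by the heat kernel $h_\l$ of $(-\Delta)^s-\l|x|^{-2s}$ and on the growth of the shifted weighted mass after finitely many iterations; such kernel estimates are not in the paper, and the growth claim is precisely the crux of the critical case, left unproved. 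Finally, even if the $L^1$-weighted mass argument were completed, you still owe the passage to the weighted $L^p$ quantity $\dint_{\ren}|x|^{-\mu(\l)}u^p\,dx$ that the theorem actually asserts.

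For comparison, the paper's proof avoids all of this by a different mechanism: it performs the ground state transform $v=|x|^{\mu(\l)}u$, so that $v$ solves the weighted problem \eqref{peso1} with the operator $L$ of kernel $K(x,y)=|x|^{-\mu(\l)}|y|^{-\mu(\l)}|x-y|^{-(N+2s)}$, and then runs the Mitidieri--Pohozaev rescaled test-function scheme with the space-time cutoff $\psi(x,t)=\varphi\big(\frac{t^2+|x|^{4s}}{R^2}\big)$, Kato's inequality and H\"older. The exact scale invariance at $p=F(\l,s)$ makes the constants $C_1,C_2$ independent of $R$, which first yields the a priori bound $\int_0^\iy\irn |x|^{-\mu(\l)(p+1)}v^p\,dxdt\le C$; then, since the terms $I$ and $J$ only see the annular region $R^2<t^2+|x|^{4s}<2R^2$ (for $J$ via the extra factor $(1-\psi)^\kappa$), they tend to $0$ as $R\to\iy$ by absolute continuity of the convergent integral, forcing $\int_0^\iy\irn|x|^{-\mu(\l)(p+1)}v^p\,dxdt\le 0$, a contradiction. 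If you want to salvage your route, you would have to supply the missing kernel estimates and the quantitative mass-growth lemma; as it stands, the proposal does not constitute a proof.
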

\begin{proof} We will perform the \textit{ground state transform}, i.e.,
define $v(x,t):=|x|^{\mu(\l)} u(x,t)$, then
$$
(-\Delta)^{s} u-\l\dfrac{\,u}{|x|^{2s}}=|x|^{\mu(\l)} L v(x,t)
$$
where
$$
L (v(x,t)):= a_{N,s}\:\:p.v. \int_{\mathbb{R}^{N}} (v(x,t)-v(y,t))K(x,y)dy
$$
and
$$K(x,y)=\dfrac{1}{|x|^{\mu(\l)}}\dfrac{1}{|y|^{\mu(\l)}}\dfrac{1}{|x-y|^{N+2s}}.$$
See \cite{FLS} and \cite{AMPP}. Thus $v$ solves the parabolic equation
\begin{equation}\label{peso1}
\left\{
\begin{array}{rcll}
|x|^{-2\mu(\l)}v_t+L v &= & |x|^{-\mu(\l)}u^p=|x|^{-\mu(\l)(p+1)}v^p & \mbox{ in } \ren\times (0,T),\\ |x|^{-\mu(\l)}v(x,0)&=& u_0(x) & \mbox{ in } \ren.
\end{array}
\right.
\end{equation}
It is clear that
$$
\dint_{\ren}|x|^{-\mu(\l)}u^p(x,t)\,dx=\dint_{\ren}|x|^{-\mu(\l)(p+1)}v^p\,dx.
$$
Therefore, in order to show the  blow-up result we will prove that
$$
\lim\limits_{t\to T^{*}}\dint_{\ren}|x|^{-\mu(\l)(p+1)}v^p\,dx=\iy.
$$
We argue by contradiction. Assume that $\dint_{\ren}|x|^{-\mu(\l)(p+1)}v^p\,dx<\infty$ for all $t<\infty$. We claim that
\begin{equation}\label{apr}
\int_0^\infty\int_{\ren}|x|^{-\mu(\l)(p+1)}v^p dxdt\le C.
\end{equation}
Let $\varphi\in \mathcal{C}^\infty_0(\ren)$ be such that $0\le \varphi\le 1$, $\varphi=1$ in $B_1(0)$ and $\varphi=0$ in $\ren\backslash B_2(0)$. Define
$\psi(x,t)=\varphi(\frac{t^2+|x|^{4s}}{R^2})$ with $R>>1$. It is clear that if $t>R$, then $\psi(x,t)=0$. Fix $T>R$, then using $\psi^m$ as a test function in
\eqref{peso1}, with $1<m<p'$, setting $Q_T=\ren\times (0,T)$ and using Kato inequality, it holds that
\begin{equation}\label{inter0}
\begin{array}{lll}
&\dyle \iint_{Q_T} |x|^{-\mu(\l)(p+1)}v^p\psi^m dxdt +\int_{\ren}|x|^{-\mu(\l)}v(x,0)\psi^m(x,0)dx\\ &=\dyle  \iint_{Q_T} |x|^{-2\mu(\l)}v\bigg(-(\psi^m)_t dxdt +
L \psi^m\bigg)dxdt\\ &\le \dyle  m\iint_{Q_T} |x|^{-2\mu(\l)}v\psi^{m-1}(-\psi_t)dxdt+m\iint_{Q_T}v\psi^{m-1} L \psi dxdt=I+J.
\end{array}
\end{equation}
We begin by estimating $I$. Define
$$
Q_T^1=\bigg\{(x,t)\in Q_T\mbox{ such that } R^2<t^2+|x|^{4s}<2R^2\bigg\},
$$
$$
Q_T^2=\bigg\{(x,t)\in Q_T\mbox{ such that } t^2+|x|^{4s}<2R^2\bigg\},
$$
it is clear that $\text{supp}\psi_t\subset Q_T^1$ and $\text{supp}\psi\subset Q_T^1$. Then we have
\begin{eqnarray*}
I &\le & \dyle  m\iint_{Q_T} |x|^{-2\mu(\l)}v\psi^{m-1}|\psi_t| dxdt\le \dyle  m\iint_{Q_T^1} |x|^{-2\mu(\l)}v\psi^{m-1}|\psi_t| dxdt\\
 & \le & m\bigg(\iint_{Q_T^1} |x|^{-\mu(\l)(p+1)}v^p\psi^m dxdt\bigg)^{\frac{1}{p}} \bigg(\iint_{Q_T^1}
 |x|^{-\mu(\l)}\frac{|\psi_t|^{p'}}{\psi^{p'-m}}dxdt\bigg)^{\frac{1}{p'}}.
\end{eqnarray*}
In the same way we have
\begin{eqnarray*}
J &\le & \dyle  m\iint_{Q_T^2}v\psi^{m-1} |L \psi| dxdt\\ & \le & \bigg(\iint_{Q_T^2} |x|^{-\mu(\l)(p+1)}v^p\psi^m dxdt\bigg)^{\frac{1}{p}} \bigg(\iint_{Q_T^2}
|x|^{\frac{\mu(\l)(p+1)}{p-1}}\frac{|L \psi|^{p'}}{\psi^{p'-m}}dxdt\bigg)^{\frac{1}{p'}}.
\end{eqnarray*}
Now, since $p=F(\l,s)$ and setting $\t=\frac{t}{R}, y=\frac{x}{R^{\frac{1}{2s}}}$, we reach that
$$
\iint_{Q_T^1} |x|^{-\mu(\l)}\frac{|\psi_t|^{p'}}{\psi^{p'-m}}dxdt=2^{p'}\iint_{\{1<\t^2+|y|^{4s}<2\}}
|y|^{-\mu(\l)}\t^{p'}\frac{|\varphi'(\t^2+|y|^{4s})|^{p'}}{\varphi^{p'-m}(\tau^2+|y|^{4s})}dyd\t\equiv C_1,
$$
and
$$
\iint_{Q_T^2} |x|^{\frac{\mu(\l)(p+1)}{p-1}}\frac{|L \psi|^{p'}}{\psi^{p'-m}}dxdt=\iint_{\{\t^2+|y|^{4s}<2\}}|y|^{\frac{\mu(\l)(p+1)}{p-1}}\frac{|L
\theta(y,\t)|^{p'}}{\theta^{p'-m}}dyd\t=C_2,
$$
where $\theta(y,\t)=\varphi(\t^2+|y|^{4s})$. Thus
\begin{equation}\label{inter1}
\begin{array}{lll}
&\dyle \iint_{Q_T} |x|^{-\mu(\l)(p+1)}v^p\psi^m dxdt\le\\ &\dyle C_1 \bigg(\iint_{Q_T^2} |x|^{-\mu(\l)(p+1)}v^p\psi^m dxdt\bigg)^{\frac{1}{p}}
+C_2\bigg(\iint_{Q_T^2} |x|^{-\mu(\l)(p+1)}v^p\psi^m dxdt\bigg)^{\frac{1}{p}}.
\end{array}
\end{equation}
Therefore, using Young inequality, we obtain that
$$
\dyle \iint_{Q_T} |x|^{-\mu(\l)(p+1)}v^p\psi^m dxdt\le C,$$ where $C$ is independent of $R$ and $T$. Letting $R,T\to \infty$, we conclude that
$$
\int_0^\infty\int_{\ren}|x|^{-\mu(\l)(p+1)}v^p dxdt\le C,
$$
and the claim follows.

Recall that by \eqref{inter0} we have
\begin{equation}\label{V0}
\iint_{Q_T} |x|^{-\mu(\l)(p+1)}v^p\psi^m dxdt\le I+J,
\end{equation}
with
\begin{equation}\label{II}
I\le C\bigg(\iint_{Q_T^1} |x|^{-\mu(\l)(p+1)}v^p\psi^m dxdt\bigg)^{\frac{1}{p}},
\end{equation}
and
\begin{equation}\label{JJ}
J\le \dyle  m\iint_{Q_T^2}v\psi^{m-1} |L \psi| dxdt.
\end{equation}
From \eqref{II} and using the result of the claim we deduce that
\begin{equation}\label{V1}
I\le C\bigg(\iint_{\{R^2<t^2+|x|^{4s}<2R^2\}} |x|^{-\mu(\l)(p+1)}v^pdxdt\bigg)^{\frac{1}{p}}\to 0\mbox{  as  }R\to \infty.
\end{equation}
Now we deal with $J$. For $\kappa>0$ small enough, We have
\begin{eqnarray*}
J &\le & \dyle  m\iint_{Q_T^2}v\psi^{m-1} (1-\psi)^\kappa (1-\psi)^{-\kappa} |L \psi| dxdt\\ & \le & \bigg(\iint_{Q_T^2} |x|^{-\mu(\l)(p+1)}v^p\psi^m
(1-\psi)^\kappa dxdt\bigg)^{\frac{1}{p}} \bigg(\iint_{Q_T^2} |x|^{\frac{\mu(\l)(p+1)}{p-1}}\frac{|L
\psi|^{p'}}{\psi^{p'-m}(1-\psi)^{\kappa(p'-1)}}dxdt\bigg)^{\frac{1}{p'}}.
\end{eqnarray*}
Using the same change of variable as above we obtain that
$$
\iint_{Q_T^2} |x|^{\frac{\mu(\l)(p+1)}{p-1}}\frac{|L
\psi|^{p'}}{\psi^{p'-m}(1-\psi)^{\kappa(p'-1)}}dxdt=\iint_{\{\t^2+|y|^{4s}<2\}}|y|^{\frac{\mu(\l)(p+1)}{p-1}}\frac{|L
\theta(y,\t)|^{p'}}{\theta^{p'-m}(1-\theta)^{\kappa(p'-1)}}dyd\t=C_3.
$$
Thus
\begin{equation}\label{V2}
\begin{array}{lll}
J &\le & \dyle C \bigg(\iint_{Q_T^2} |x|^{-\mu(\l)(p+1)}v^p\psi^m (1-\psi)^\kappa dxdt\bigg)^{\frac{1}{p}}\\ &\le & \dyle C\bigg(\iint_{Q_T^1}
|x|^{-\mu(\l)(p+1)}v^p dxdt\bigg)^{\frac{1}{p}}\to 0 \mbox{  as  }R\to \infty.
\end{array}
\end{equation}
Thus combining \eqref{V0}, \eqref{V1} and \eqref{V2} and letting $R\to \infty$, we conclude that
$$
\int_0^\infty\int_{\ren}|x|^{-\mu(\l)(p+1)}v^p dxdt\le 0,
$$
a contradiction and then the result follows.
\end{proof}
\begin{remarks}
Notice that the above blow up result holds under the hypothesis that we can choose $\varphi\in \mathcal{C}^\infty_0(B_2(0))$ with $0\le \varphi\le 1$, $\varphi=1$
in $B_1(0)$ and
$$
\iint_{\{1<\t^2+|y|^{4s}<2\}} |y|^{-\mu(\l)}\t^{p'}\frac{|\varphi'(\t^2+|y|^{4s})|^{p'}}{\varphi^{p'-m}(\tau^2+|y|^{4s})}dyd\t\equiv C_1,
$$
$$
\iint_{\{1<\t^2+|y|^{4s}<2\}}|y|^{\frac{\mu(\l)(p+1)}{p-1}}\frac{|L \theta(y,\t)|^{p'}}{\theta^{p'-m}(1-\theta)^{\kappa(p'-1)}}dyd\t=C_3
$$
where $\theta(y,\t)=\varphi(\t^2+|y|^{4s})$. \\ The above conditions hold choosing $m$ closed to $p'$ and $\kappa$ small enough.
\end{remarks}

\section{Global existence for  $F(\lambda,s)<p<p_+(\l,s)$.}\label{global}
In order to show the optimality of $F(\l,s)$  we will prove that, under suitable condition on $u_0$, problem \eqref{Cauchy} has a global solution. To achieve this
affirmation, we will show the existence of a family of global supersolutions to problem \eqref{Cauchy} where $F(\l,s)<p<p_+(\l,s)$.

Recall that $F(\l,s)=1+\frac{2s}{N-\mu(\l)}$, since $p<p_+(\l,s)=1+\frac{2s}{\mu(\l)}$, then $\frac{2s}{p-1}>\mu(\l)$. Fix $\g>0$ be such that
 $\mu(\l)<\g<\frac{2s}{p-1}$, then for $T>0$, we define
\begin{equation}\label{www}
w(x,t,T)=A(T+t)^{-\theta}\big(\dfrac{|x|}{(T+t)^{\beta}}\big)^{-\gamma}H(\frac{|x|}{(T+t)^{\beta}}),
\end{equation}
where $\theta=\frac{2s}{p-1}$ and, as above, $\beta=\frac{1}{2s}$. Notice that
$$
w(x,t,T)=A(T+t)^{-\theta+\frac{\g}{2s}+\frac{N}{2s}}|x|^{-\gamma}h(x,t+T).
$$
It is clear that
$$
{h}_t(x,t+T)+(-\D)^s{h}(x,t+T)=0.
$$
We claim that, under suitable condition on $A$ and $T$, $w$ satisfies
\begin{equation}\label{eq:radialg}
w_t+(-\Delta)^s w-\lambda\dfrac{w}{r^{2s}}\geq w^p.
\end{equation}
For simplicity of typing we set
$$
D(x,t,T)=A(T+t)^{-\theta+\frac{\g}{2s}+\frac{N}{2s}}|x|^{-\gamma},
$$
then
$$
w(x,t,T)=D(x,t,T) h(x,t+T).
$$
By a direct computations we reach that
\begin{eqnarray*}
& w_t+(-\Delta)^s w-\lambda\dfrac{w}{r^{2s}}=\\ & D(x,t,T)\bigg({h}_t(x,t+T)+(-\D)^s {h}(x,t+T)\bigg)+{h}(x,t+T)\bigg(D_t(x,t,T)+(-\D)^s D(x,t,T)\bigg)\\ &-\dyle
\irn\dfrac{({h}(x,t+T)-{h}(y,t+T))(D(x,t,T)-D(y,t,T))}{|x-y|^{N+2s}}dy\\ &-\l \dfrac{D(x,t,T){h}(x,t+T)}{|x|^{2s}}.
\end{eqnarray*}
Since $T>0$, then
$$
{h}_t(x,t+T)+(-\D)^s {h}(x,t+T)=0.
$$
On the other hand we have
\begin{eqnarray*}
D_t(x,t,T)+(-\D)^s D(x,t,T)=(-\theta+\frac{\g}{2s}+\frac{N}{2s})\frac{D(x,t,T)}{(T+t)} + \l(\gamma)\frac{D(x,t,T)}{|x|^{2s}}.
\end{eqnarray*}
Since $\g>\mu(\l)$, then $\l(\g)>\l$.

We deal now with the mixed term
$$
J(x):=-\irn\dfrac{({h}(x,t+T)-{h}(y,t+T))(D(x,t,T)-D(y,t,T))}{|x-y|^{N+2s}}dy.
$$
By a direct computations, it follows that
\begin{eqnarray*}
J(x) &= & -A(T+t)^{-\theta+\frac{\g}{2s}}\irn\frac{(|x|^{-\gamma}-|y|^{-\g})(H(\frac{|x|}{(T+t)^{\beta}})-H(\frac{|y|}{(T+t)^{\beta}}))}{|x-y|^{N+2s}}dy\\ &=&
-AC^{N-\gamma}(T+t)^{-\theta+\frac{\g}{2s}-1}\irn\frac{(|x_1|^{-\gamma}-|y_1|^{-\g})(H(|x_1|)-H(|y_1|))}{|x_1-y_1|^{N+2s}}dy_1,
\end{eqnarray*}
where $x_1=\frac{|x|}{(T+t)^{\beta}}$ and $y_1=\frac{|y|}{(T+t)^{\beta}}$. Since $H$ is decreasing then $J(x)\ge 0$. Therefore, combining the above estimates it
holds that

\begin{eqnarray*}
w_t+(-\Delta)^s w-\lambda\dfrac{w}{r^{2s}} &=& (-\theta+\frac{\g}{2s}+\frac{N}{2s})\frac{w(x,t,T)}{(T+t)}+(\l(\gamma)-\l)\frac{w(x,t,T)}{|x|^{2s}}+J(x)\\ &\ge &
(-\theta+\frac{\g}{2s}+\frac{N}{2s})\frac{w(x,t,T)}{(T+t)}+(\l(\gamma)-\l)\frac{w(x,t,T)}{|x|^{2s}}.
\end{eqnarray*}
Hence, $w$ is a supersolution to \eqref{Cauchy} if we can chose $A,C>0$ such that
$$
(-\theta+\frac{\g}{2s}+\frac{N}{2s})\frac{w(x,t,T)}{(T+t)}+(\l(\gamma)-\l)\frac{w(x,t,T)}{|x|^{2s}}\ge w^p
$$
hence
$$
(-\theta+\frac{\g}{2s}+\frac{N}{2s})\frac{1}{(T+t)}+(\l(\gamma)-\l)\frac{1}{|x|^{2s}}\ge w^{p-1}.
$$
The last inequality is equivalent to have
\begin{equation}\label{s11}
\begin{array}{lll}
& (\dfrac{N+\g}{2s}-\theta)+ (\l(\gamma)-\l)(T+t)|x|^{-\gamma-2s}\\ &\,\\ &\ge A^{p-1}(T+t)^{-(p-1)\theta+\frac{(p-1)\g}{2s}+1}
|x|^{-(p-1)\gamma}H^{p-1}\Big(\dfrac{|x|}{(T+t)^{\beta}}\Big).
\end{array}
\end{equation}
Recall that $\theta=\frac{2}{p-1}$, since $\g<\frac{2s}{p-1}$, then
$$
-(p-1)\theta+\frac{(p-1)\g}{2s}+1=\frac{(p-1)\g}{2s}-1<0.
$$
On the other hand we have $2s+\g>(p-1)\g$. Thus going back to \eqref{s11}, we conclude that, for any $T>0$, we can choose $A$ small such that $w$ is a
supersolution to \eqref{Cauchy} and then the claim follows.

We are now able to state the main global existence result in this section.
\begin{Theorem}\label{globalm}
Assume that $F(\lambda,s)<p<p_+(\l,s)$. Let $u_0$ be a nonnegative function such that
$$
|x|^{\mu(\l)} u_0(x)\le \frac{C}{(1+|x|^2)^{\frac{N+2s}{2}}},
$$
then the Cauchy problem \eqref{Cauchy} has a global solution $u$ such that $u(x,t)\le w(x,t,T)$ for all $(x,t)\in \ren\times (0,\infty)$.
\end{Theorem}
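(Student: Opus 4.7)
The plan is to use $w(x,t,T)$, which has just been shown to be a global pointwise supersolution of \eqref{Cauchy} on $\ren\times(0,\iy)$, as an upper barrier, and to construct $u$ by a monotone approximation scheme that is controlled by $w$ from above.

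First I would verify that $A$ and $T$ can be chosen so that $u_0(x)\le w(x,0,T)$ pointwise on $\ren$. From the explicit form
$$w(x,0,T)=A\,T^{-\theta+\gamma/(2s)}|x|^{-\gamma}H\bigl(|x|\,T^{-1/(2s)}\bigr),$$
the two-sided asymptotic $H(\sigma)\approx(1+\sigma^2)^{-(N+2s)/2}$ from \eqref{P}, and the strict inequality $\gamma>\mu(\l)$, a direct case-by-case comparison with the hypothesis $|x|^{\mu(\l)}u_0(x)\le C(1+|x|^2)^{-(N+2s)/2}$ in the two regimes $|x|\le T^{1/(2s)}$ and $|x|\ge T^{1/(2s)}$ lets me fix $T$ and $A$ large enough so that the pointwise barrier holds at the initial time.

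Next I would build the solution by approximation. Following the scheme used in \cite{AMPP}, I would replace the singular coefficient $\lambda/|x|^{2s}$ by its regularization $\lambda/(|x|^{2s}+\epsilon)$ and truncate the nonlinearity as $T_k(u)^p=\min(u,k)^p$, obtaining a family of parabolic problems $(P_{\epsilon,k})$ with locally Lipschitz right-hand side. Since both regularizations only decrease the right-hand side, $w(x,t,T)$ remains a classical supersolution of every $(P_{\epsilon,k})$ with the same initial datum. Monotone iteration between the subsolution $0$ and the supersolution $w$ then produces a nonnegative solution $u_{\epsilon,k}$ of $(P_{\epsilon,k})$ satisfying $0\le u_{\epsilon,k}(x,t)\le w(x,t,T)$ on $\ren\times(0,\iy)$.

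Finally I would send $\epsilon\to 0$ and $k\to\iy$. The monotonicity of $u_{\epsilon,k}$ together with the uniform domination by $w$ produces a limit $u\le w$, and dominated convergence against the locally integrable majorant $w$ allows passage to the limit in both the singular term $\lambda u_{\epsilon,k}/(|x|^{2s}+\epsilon)$ and in $u_{\epsilon,k}^p$, giving a weak global solution of \eqref{Cauchy} in the sense of Definition \ref{def:blow-sense}. The main obstacle is the comparison $u_{\epsilon,k}\le w$ in the presence of the Hardy-type potential; I would handle this by adapting the ground-state transformation used in the proof of Theorem \ref{blowupp}, setting $v_{\epsilon,k}=|x|^{\mu(\l)}u_{\epsilon,k}$ and $W=|x|^{\mu(\l)}w$, which converts the comparison into one for a weighted parabolic equation governed by the positive-kernel operator $L$, where a Kato-type inequality yields the desired pointwise bound.
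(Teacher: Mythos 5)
Your proposal is correct and follows essentially the same route as the paper: dominate $u_0$ by $w(\cdot,0,T)$ using the two-sided bounds on $H$, take $v\equiv 0$ as subsolution and the global supersolution $w$ constructed in Section \ref{global} as barrier, and run a monotone iteration to get a global solution with $u\le w$. The extra detail you supply (regularizing $\lambda/|x|^{2s}$, truncating $u^p$, and justifying comparison via the ground-state transform) is exactly what the paper compresses into the phrase ``classical iteration argument'' with reference to the scheme of \cite{AMPP}, so it is a faithful, slightly more explicit version of the same proof.
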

\begin{proof}
Let $u_0$ be a nonnegative function such that the above condition holds, then $u_0\in L^2(\ren)$. According to the definition of $w$ given in \eqref{www}, there
exist $A,T>0$ such that $u_0(x)\le w(x,0,T)$ for all $x\in \ren$. Thus $w$ is a supersolution to problem \eqref{Cauchy}. Since $v(x,t)=0$ is a strict subsolution,
then using a classical iteration argument, the existence result follows.
\end{proof}

\begin{remark}

In the general case $1<p<p_+(\l)$ and under some hypotheses on $u_0$  it  is possible to show a complete blow-up in a suitable sense.

Suppose that $u_0(x)\ge h$ where $h\ge 0$ satisfies $h\in \mathcal{C}^{\infty}_0(\ren)$, $\text{supp}(h)\subset B_0(R)$ and
\begin{equation}\label{ddd}
\frac{1}{p+1}\irn h^{p+1}dx > \frac{a_{N,s}}{4}\iint_{D_\Omega}{\frac{(h(x)-h(y))^2}{|x-y|^{N+2s}}}\, dx\, dy-\frac{\l}{2}\irn \frac{ h^2}{|x|^{2s}}dx.
\end{equation}
Then if $u$ is a positive solution to problem \eqref{Cauchy} we have
$$
\int_{B_R(0)}u^2(x,t)dx\to \infty \mbox{  as  }t\to T^*.
$$
We argue by contradiction. Suppose that the above conditions holds and that
\begin{equation}\label{eq:nono}
\sup_{t\in (0,T)}\int_{B_R(0)}u^2(x,t)dx\le M(T)<\infty .
\end{equation}
Let  $w$ be the unique positive solution to the problem
\begin{equation}\label{newap}
\left\{
\begin{array}{rcll}
w_t+ (-\D)^s w &=&\l\dfrac{\,w}{|x|^{2s}+1}+w^p & \inn B_R(0) \times (0,T(w)),\\ w(x,t)&=& 0 &\inn (\ren \backslash B_R(0))\times (0,T(w)),\\ w(x,0)&=&h(x)
&\mbox{ if }x\in B_R(0).
\end{array}
\right.
\end{equation}
It is clear that $w\in L^2(0,T(h); H^s_0(B_R(0)))\cap L^{\infty}(B_R(0)\times (0,T(w))$. Since $u$ is a supersolution to \eqref{newap}, then $w\le u$ and
therefore $T(w)=\infty$. Define {\it the energy} in time $t$,
$$E(t)= \frac{a_{N,s}}{4}\iint_{D_{B_R(0)}}{\frac{(w(x,t)-w(y,t))^2}{|x-y|^{N+2s}}}\,
dx\, dy -\frac{\l}{2}\int_{B_R(0)}\frac{w^2}{|x|^{2s}+1}dx-\frac{1}{p+1}\int_{B_R(0)}w^{p+1}dx.$$ By a direct computations, it follows that $\frac{d}{dt}E(t)=
-\langle w_t, w_t\rangle\le 0$. Taking into consideration the hypothesis on $h$, we conclude that $E(t)\le 0$ for all $t$. Hence
\begin{eqnarray*} \frac{d}{dt}\int_{B_R(0)}w^2(x,t)dx \geq C \Big(\int_{B_R(0)}w^2(x,t)dx\Big)^{\frac{p+1}{2}}.
\end{eqnarray*}
By integration, it holds
$$\dint_{B_R(0)}w^2(x,t)dx\to
\infty \hbox{  as } t\to T^*<\infty,$$
 a contradiction with \eqref{eq:nono}.
\end{remark}

\begin{remark}
Notice that
$$
p_{-}(\lambda,s)=1+\frac{2s}{\bar{\mu}(\lambda)}\geq 1+\frac{2s}{N-\mu(\lambda)}.
$$
Hence,
$$
1+\dfrac{2s}{N}\leq 1+\dfrac{2s}{N-\mu(\lambda)}\leq p_-(\lambda,s)\leq 2_{s}^*-1\leq p_{+}(\lambda,s).
$$
See Figure 2.
\end{remark}

\

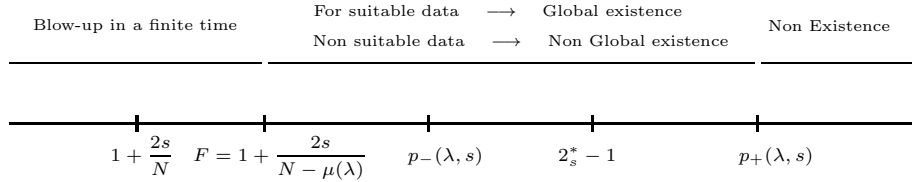
\begin{figure}
\begin{center}
\setlength{\unitlength}{1mm}
\begin{picture}(120,20)
\put(0,10){\line(1,0){33.5}} \put(34.5,10){\line(1,0){64.5}} \put(100,10){\line(1,0){20}} \thicklines \put(0,2){\line(1,0){120}} \put(17,1){\line(0,1){2}}
\put(34,1){\line(0,1){2}} \put(55.8,1){\line(0,1){2}} \put(77.6,1){\line(0,1){2}} \put(99.5,1){\line(0,1){2}} \put(16.75,15){\makebox(0,0){\tiny Blow-up in a
finite time}} \put(65,17){\makebox(0,0){\tiny For suitable data $\quad\longrightarrow\quad$ Global existence}} \put(68,13){\makebox(0,0){\tiny Non suitable data
$\quad\longrightarrow\quad$ Non Global existence}} \put(109,15){\makebox(0,0){\tiny Non Existence}} \put(24.5,-3){\scriptsize $F=1+\dfrac{2s}{N-\mu(\lambda)}$}
\put(13.5,-3){\scriptsize $1+\dfrac{2s}{N}$} \put(53,-3){\scriptsize $p_-(\lambda,s)$} \put(73,-3){\scriptsize $2_{s}^*-1$} \put(97,-3){\scriptsize
$p_+(\lambda,s)$}
\end{picture}
\end{center}
\caption{Existence versus blow-up}
\end{figure}


\begin{thebibliography}{ABCD}

\bibitem{APP} \sc{B. Abdellaoui, I. Peral, A. Primo,} {\em Influence of the Hardy potential in a semilinear heat equation}, Proceedings of the Royal Society of
    Edinburgh, Section A Mathematics  {\bf 139}  (2009), no 5, 897--926.

\bibitem{AMPP} \sc {B. Abdellaoui, M. Medina, I. Peral, A. Primo},  {\em Optimal results for the fractional heat equation involving the Hardy potential.}
    Nonlinear Anal. {\bf 140} (2016), 166--207.

\bibitem{BaGo} \sc{P. Baras, J.A. Goldstein}, {\em The heat equation with a singular potential.} Trans. Amer. Math. Soc.
    {\bf 284} (1984), no. 1, 121-139.

\bibitem{BMP}\textsc{B. Barrios, M. Medina, I. Peral}, {\em Some remarks on the solvability of non-local elliptic problems with the Hardy potential.}  Commun.
    Contemp. Math. {\bf 16} (2014), no. 4, 1350046, 29 pp.


\bibitem{B} {\sc W. Beckner}, {\em Pitt's inequality and the uncertainty principle}, Proceedings of the American Mathematical Society, {\bf 123}, (1995), no 6, 1897-1905

\bibitem{Bluget} \textsc{R. M. Blumenthal, R. K. Getoor}, {\it Some theorems on stable processes},  Trans. Amer. Math. Soc. {\bf 95}, (1960), 263-273.

\bibitem{CF} {\sc L. Caffarelli, A. Figalli}, {\em Regularity of solutions to the parabolic fractional obstacle problem}, Journal f\"{u}r die Reine und Angewandte Mathematik, {\bf 680}, (2013), 191-233

\bibitem{dine} {\sc E. Di Nezza, G. Palatucci,  E. Valdinoci}, {\em Hitchhiker's guide to the fractional Sobolev
    spaces}, Bull. Sci. Math. {\bf 136}, (2012), no. 5, 521-573.

\bibitem{FLS} {\sc R. Frank, E. H. Lieb, R. Seiringer}, {\em Hardy-Lieb-Thirring inequalities for fractional Schr\"{o}dinger operators}, Journal of the American
    Mathematical Society {\bf 20}, (2008), no. 4, 925-950.

\bibitem{F} {\sc H.  Fujita}, {\em On the blowing up of solutions of the Cauchy problem for $u_t = \D u+u^{1+\a}$.} J. Fac. Sci. Univ. Tokyo Sect. I {\bf 13},
    (1966), 109-124.

\bibitem{GK} {\sc M. Guedda, M. Kirane}, {\em Criticality for some evolution equations}, Differential Equations {\bf 37}, (2001), 540-550.

\bibitem{He} {\sc I. W. Herbst}, {\em Spectral theory of the operator $(p^2+m^2)^{1/2}-Ze^2/r$}, Commun. math. Phys.
    {\bf 53}, (1977), 285-294.

\bibitem{KST} {\sc K. Kobayashi, T. Sino, H. Tanaka}, {\em On the growing-up problem for semilinear heat equations.} J. Math. Soc. Japan, {\bf 29}, (1977),
    407-424.

\bibitem{Landkof} {\sc N. Landkof}, {\em Foundations of modern potential theory}, Die Grundlehren der mathematischen Wissenschaften, Band 180. Springer-Verlag,
    New York-Heidelberg, 1972.


\bibitem{MP} \sc{ E. Mitidieri, S.I. Pohozhaev}, {\em A Priori Estimates and Blow-up of Solutions to Nonlinear Partial Differential Equations and Inequalities},
    Book in Proceedings of the Steklov Institute of Mathematics. August 2014

\bibitem{Peral-Soria} {\sc I. Peral, F. Soria,} {\it Elliptic and Parabolic Equations involving the  Hardy-Leray Potential.}    De Gruyter Series in Nonlinear
    Analysis and Applications. To appear 2021.	

\bibitem{Pol}\textsc{ G. Polya}, {\it On the zeros of an integral function represented by Fourier's integral}, Messenger of Math. {\bf  52}, (1923), 185-188.    	
    \bibitem{QS} {\sc P. Quittner, P. Souplet}, {\em Superlinear Parabolic Problems Blow-up, Global Existence and Steady States}. Birkhäuser Advanced Texts Basler Lehrbücher 2007

\bibitem{MRiesz} \textsc{M. Riesz}, {\it Int\'{e}grales de Riemann-Liouville et potenciels}, Acta Sci. Math. Szeged {\bf 9}, (1938), 1-42.

\bibitem{S}  {\sc L. Silvestre}, {\em On the differentiability of the solution to an equation with drift and fractional diffusion.} Indiana Univ. Math. J. {\bf
    61} (2012), no. 2, 557-584.

\bibitem{SS} {\sc S. Sugitani}, {\em On nonexistence of global solutions for some nonlinear integral equations }, Osaka J. Math. {\bf 12}, (1975), 45-51.

\bibitem{SW}{\sc E. M. Stein, G. Weiss}, {\em Fractional integrals on n-dimensional Euclidean space}. J. Math. Mech. {\bf 7}, (1958),  503-514.

\bibitem{W}  {\sc F. Weissler}, {\em Existence and nonexistence of global solutions for a semilinear heat equation.} Israel J. Math. {\bf 38}, (1981), no. 1-2,
    29-40.

\bibitem{Y} {\sc D. Yafaev}, {\em Sharp constants in the Hardy-Rellich inequalities}. J. Functional Analysis {\bf 168}, (1999), no. 1, 121-144.
\end{thebibliography}
\end{document}